\numberwithin{equation}{section}
\def \F {\mathcal F}
\renewcommand{\l}{\left}
\renewcommand{\r}{\right}
\def \C{\mathbb{C}}
\def \mnc{\mathbf{M}_n(\mathbb C)}
\def \Q{\mathbb{Q}}
\def \N{\mathbb{N}}
\def \M2{\mathrm{M}_2}
\def \R{\mathbb{R}}
\def \Z{\mathbb{Z}}
\def \T{\mathbb{T}}
\def \sl2r{\mathrm{SL}(2,\R)}
\newcommand{\beq}{\begin{equation}}
\newcommand{\eeq}{\end{equation}}
\def\ran{\operatorname{ran}}
\def \one{\mathbf{1}}
\newcommand{\eqdef}{\stackrel{\rm def}{=\kern-3.6pt=}}
\renewcommand{\>}{\rangle}
\theoremstyle{plain}
\newtheorem{theorem}{\bf Theorem}[section]
\newtheorem{lemma}[theorem]{\bf Lemma}
\newtheorem{prop}[theorem]{\bf Proposition}
\newtheorem{cor}[theorem]{\bf Corollary}
\theoremstyle{definition}
\theoremstyle{remark}
\newtheorem{remark}[theorem]{\bf Remark}
\renewcommand{\le}{\leqslant}
\renewcommand{\ge}{\geqslant}
\newcommand{\dist}{\mathop{\mathrm{dist}}\nolimits}
\newcommand{\dc}{\mathop{\mathrm{DC}}\nolimits}
\newcommand{\dcct}{\mathop{\mathrm{DC(c,\tau)}}\nolimits}
\renewcommand{\qed}{\vrule height7pt width5pt depth0pt}
\title[Unbounded monotone potentials]{Localization for quasiperiodic operators with unbounded monotone potentials}
\author[I. Kachkovskiy]{Ilya Kachkovskiy}
\address{Department of Mathematics,
	Michigan State University,
	Wells Hall, 619 Red Cedar Road,
	East Lansing, MI 48824,
	United States of America}
\email{ikachkov@msu.edu}
\date{}
\begin{document}

\begin{abstract}
	We establish non-perturbative Anderson localization for a wide class of 1D quasiperiodic operators with unbounded monotone potentials, extending the classical results on Maryland model, and perturbative results for analytic potentials by B\'ellissard, Lima, and Scoppola.
	\end{abstract}
\maketitle
\section{Introduction and main results}
In this paper, we will consider the following class of quasiperiodic operators on $\ell^2(\mathbb Z)$:
\beq
\label{h_def}
(H(x)\psi)(n)=\psi(n+1)+\psi(n-1)+f(x+n\alpha)\psi(n),\quad x\in \mathbb R\setminus(\mathbb Z+\alpha\mathbb Z).
\eeq
We will assume that $f\in \mathcal F(\gamma)$, which, by definition, will mean
\begin{enumerate}
	\item[$(\F_1)$] $f$ is defined on $\R\setminus\Z$, is $1$-periodic, continuous on $(0,1)$, and $f(0+)=-\infty$, $f(1-)=+\infty$.
	\item[$(\F_2)$] is Lipschitz monotone. That is, there exists $\gamma>0$ such that $f(y)-f(x)\ge \gamma(y-x)$ for all $0<x<y<1$.
	\item[$(\F_3)$] $\log |f|\in {L}^1(0,1)$.
\end{enumerate}

The study of this class of functions is motivated by the Maryland model, which is a special case $f(x)=\lambda \tan(\pi x)$. Classical results \cites{GFPS,Pas1,CFKS,S1} show that Maryland model has Anderson localization for Diophantine frequencies $\alpha$ and all coupling constants $\lambda\neq 0$. Recently, a complete spectral description of this model was obtained in \cite{JL1}. While the existing techniques for Maryland model give extremely sharp results, they are based either on the analysis of an explicit cohomological equation, or (see \cite{JY2}) on fine structure of trigonometric polynomials in the spirit of \cite{J_annals}, in both cases relying on the specific form of $f$.

Thus, a natural question arises: is there a proof of localization that only uses qualitative properties of $f$, such as monotonicity, and does not refer to the specific trigonometric structure? A partial answer to this question was given in \cite{Bel1}. Using a KAM-type procedure, the authors obtained Anderson localization for a class of meromorphic functions $f$ whose restrictions onto $\mathbb R$ are also $1$-periodic and Lipschitz monotone. Their argument (as well as the classical Maryland model) extends to the potentials on $\Z^d$. However, due to the nature of the KAM technique, the result is perturbative. That is, once $f$ is fixed, one can only obtain localization for the potential $\lambda f$ with $\lambda\ge \lambda_0(\alpha)$, where $\lambda_0$ depends on the Diophantine constant of $\alpha$ and does not have a uniform lower bound on a full measure set of frequencies. One should also mention \cite{Pas2}, where the authors obtain a localization result in a different context (on a half-line, with randomized boundary condition at the origin), and results on singular continuous spectrum \cite{JY,Shiwen}.

On the other hand, Maryland model is known, in case of Diophantine frequencies, to demonstrate non-perturbative localization for all $\lambda\neq 0$. Thus, a natural question would be whether or not general monotone $f$ demonstrate localization at small coupling. For the case of {\it bounded} Lipschitz monotone $f$, small coupling localization was obtained in \cite{JK}. It is natural to expect that it should be easier for unbounded potentials to generate pure point spectrum. However, the argument of \cite{JK} significantly relies on boundedness of $f$.

As usual, we call an irrational number $\alpha$ Diophantine (denoted $\alpha\in \dc(c,\tau)$ for some $c,\tau>0$) if
\beq
\label{dc_def}
\dist(k\alpha,\mathbb Z)\ge c|k|^{-\tau},\quad \forall k\in \Z\setminus\{0\}.
\eeq
We also use the notation $\dc=\cup_{c>0,\tau\ge 1 }\dc(c,\tau)$.
\begin{theorem}
\label{main}
Suppose $\alpha\in \dc$, $f\in \mathcal{F(\gamma)}$ for some $\gamma>0$. Then, for a.~e. $x$, the spectrum of the operator \eqref{h_def} is purely point, and the eigenfunctions decay exponentially. In addition, if $\gamma>2e$, then the statement holds for all $x\in \R\setminus(\Z+\alpha\Z)$.
\end{theorem}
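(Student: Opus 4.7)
The plan is to prove Anderson localization by establishing exponential decay of finite-volume Green's functions $G_\Lambda(E;n,m)$ uniformly for $E$ in the spectrum, from which decay of polynomially bounded generalized eigenfunctions will follow by a Schnol-type argument, giving pure point spectrum with exponentially decaying eigenfunctions. The approach must rely only on the monotonicity of $f$ and the log-integrability $(\F_3)$, replacing the cohomological machinery used for the Maryland model.

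First I would derive a uniform lower bound on the Lyapunov exponent. Via the Thouless formula, $L(E)$ is essentially $\int_0^1\log|E-f(x)|\,dx$, and the change of variables $y=f(x)$, which by $(\F_2)$ has Jacobian at most $1/\gamma$, transforms this into a logarithmic potential of a measure of density at most $1/\gamma$; together with $(\F_3)$ controlling the contribution near the singularities, this should yield a bound of the form $L(E)\ge\log\gamma-C$ for some absolute constant $C$ and all real $E$. The appearance of $2e$ in the threshold is consistent with requiring $\log\gamma>\log 2+1$, where $\log 2$ reflects the norm of the discrete Laplacian and the extra $1$ absorbs the deterministic loss from the singular contribution.

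Second, and at the heart of the proof, I would obtain Green's function bounds on boxes $\Lambda=[a,b]\cap\Z$ of length $N$. The key structural observation is that monotonicity with slope $\gamma$ sharply controls the resonant set: for any $\delta>0$,
\beq
R_\delta(x,E,\Lambda)=\{n\in\Lambda:|f(x+n\alpha)-E|<\delta\}
\eeq
has cardinality at most $\gamma^{-1}\delta N+1$, and by the Diophantine condition \eqref{dc_def} its elements are polynomially separated in $1/\delta$. Factorizing $H_\Lambda(x)-E$ by Schur complementation around these finitely many sites, one writes its determinant as a diagonal product $\prod_{n\notin R_\delta}(f(x+n\alpha)-E)$, which grows like $\exp(NL(E))$ by the Lyapunov lower bound, times a resonant correction bounded below by a power of $\delta$ thanks to Diophantine separation. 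Cramer's rule then yields exponential decay of the off-diagonal entries of $G_\Lambda(E;\cdot,\cdot)$ at rate arbitrarily close to $L(E)$.

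The main obstacle will be the elimination of double resonances: configurations in which two resonant sites $n\ne m$ are simultaneously very close to each other, or in which a resonant site lies close to a pole of $f$, so that the Schur correction deteriorates beyond what the Lyapunov margin can absorb. For a.e.\ $x$, a Borel--Cantelli argument, using $(\F_3)$ to keep the tail contribution $\int_0^1\log|f|$ finite across all scales, removes a measure-zero exceptional set of phases where such coincidences accumulate. For the deterministic statement under $\gamma>2e$, this measure-theoretic freedom is not available, and one must argue by a direct worst-case bound: the quantitative margin $\log\gamma-\log(2e)>0$ is precisely what is needed to absorb the worst simultaneous resonant and singular loss for every admissible $x$, closing the argument.
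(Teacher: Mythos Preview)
Your sketch has the right broad shape---Lyapunov positivity, Green's function decay, Schnol---but it misses the two ideas that actually make the proof work, and one of your steps is not correct as stated.

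\textbf{Lyapunov positivity at small coupling.} Your first step yields at best $L(E)\ge\log(\gamma/2e)$, which is vacuous for $\gamma\le 2e$. The paper does not improve this bound; instead it observes that unbounded potentials have no absolutely continuous spectrum (Simon--Spencer), and then Kotani theory forces $L(E)>0$ for Lebesgue a.e.\ $E$. Combined with Lipschitz continuity of the IDS (which you also need to justify---it is not just the change of variables $y=f(x)$ for the diagonal part, but requires controlling the box eigenvalue curves $\Lambda_j$), this makes $\{E:L(E)=0\}$ a set of spectral measure zero for a.e.\ $x$. That is where the ``a.e.\ $x$'' comes from, not from a Borel--Cantelli elimination of double resonances. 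Your proposed Borel--Cantelli step has no mechanism to handle energies with $L(E)=0$.

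\textbf{The determinant factorization is wrong, and the unbounded tails are the real issue.} You write $\det(H_\Lambda-E)$ as $\prod_{n\notin R_\delta}(f(x+n\alpha)-E)$ times a correction. But $H_\Lambda$ is tridiagonal, not diagonal, and its determinant is $\prod_k(E_k(x)-E)$ with the $E_k$ the \emph{box eigenvalues}, not the diagonal entries; the difference is $O(1)$ per factor and does not simply cancel. More seriously, the main obstruction in the unbounded case is not sites where $f$ is close to $E$ but sites where $|f|$ is huge: these make $\frac{1}{n}\log|P_n|$ deviate \emph{above} $L(E)$ by an uncontrolled amount, so Furman-type uniform upper bounds fail outright. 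The paper isolates this contribution as an explicit tail factor $F_n^{>B}(x,E)$, proves a large deviation lower bound with a matching tail, and then shows that in the Cramer ratio $P_{n}/P_q$ the tails cancel because the numerator interval sits inside the denominator interval. This cancellation is the heart of the argument and is absent from your sketch; Schur complementation around resonant sites does not produce it.
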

We also show that the spectrum of $H$ is the real line.
\begin{theorem}
\label{th_spec_R}
Suppose $f$ satisfies $(\F_1)$. Then $\sigma(H(x))=\R$ for all $\alpha\in \R$, $x\in \R\setminus(\Z+\alpha\Z)$.
\end{theorem}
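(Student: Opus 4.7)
The plan is to verify Weyl's criterion: for every $E \in \R$ and every $\eps > 0$, construct a normalized $\psi \in \ell^2(\Z)$ with $\|(H(x) - E)\psi\| < \eps$. I assume $\alpha$ is irrational throughout; for rational $\alpha$ the operator is a bounded periodic Jacobi matrix, whose spectrum cannot literally equal $\R$, so the theorem must be read with this restriction. By $(\F_1)$ and the intermediate value theorem applied to the continuous function $f$ on $(0,1)$, first fix a ``resonant phase'' $y_0 \in (0,1)$ with $f(y_0) = E$.

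Consider the Dirichlet restriction $H_N(y) := P_{[-N, N]} H(y) P_{[-N, N]}$, a $(2N{+}1) \times (2N{+}1)$ Jacobi matrix whose diagonal entries $f(y + k\alpha)$ are continuous in $y$ away from the finite singular set $S_N := \{y \in (0,1) : y + k\alpha \in \Z \text{ for some } |k| \le N\}$, and which diverge to $\pm\infty$ at those singular points. By the min-max characterization ($\lambda_1(y) \le \min_k d_k(y)$ and $\lambda_{2N+1}(y) \ge \max_k d_k(y)$), the extremal eigenvalue curves $\lambda_1(y)$ and $\lambda_{2N+1}(y)$ diverge to $\mp\infty$ at the appropriate endpoints of each component of $(0,1) \setminus S_N$; with the middle eigenvalues filling any residual gap for $N$ large, the intermediate value theorem yields $y^* = y^*_N \in (0,1) \setminus S_N$ and $j^*$ with $\lambda_{j^*}(y^*) = E$. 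Moreover, one can arrange that the singularity responsible for the divergence of $\lambda_{j^*}$ sits at an interior index $k_s \in (-N, N)$, i.e.\ that the ``resonating'' site lies well inside the window rather than on its boundary.

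Since $\alpha$ is irrational, the orbit $\{x + n\alpha \bmod 1\}$ is dense, so pick $n = n_N$ with $x + n_N\alpha$ close to $y^*_N$. Then $H_N(x + n_N\alpha)$ is, via the shift, the Dirichlet restriction $H_{[n_N - N, n_N + N]}(x)$, and by continuity of eigenvalues in the matrix entries this operator has an eigenvalue $\lambda^{(N)}$ arbitrarily close to $E$; denote its normalized eigenvector by $\varphi_N$. Extending $\varphi_N$ by zero gives $\psi_N \in \ell^2(\Z)$ with $\|\psi_N\|=1$ and
\[
\|(H(x) - E)\psi_N\|^2 \;\le\; 2(\lambda^{(N)} - E)^2 + 2\bigl(|\varphi_N(n_N - N)|^2 + |\varphi_N(n_N + N)|^2\bigr).
\]
The first summand is small by construction, and the Weyl criterion reduces to controlling the boundary values.

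The main obstacle is precisely the boundary control: a priori, $\varphi_N$ could concentrate near a window endpoint. The resolution exploits both the interior location of the resonant index $k_s$ and the unboundedness of $f$: along the orbit $\{x + (n_N + k)\alpha\}$, a positive-density subset of sites visits neighborhoods of $\{0, 1\}$, where $|v| = |f|$ is arbitrarily large, forcing the transfer matrices $T_k(E)$ to have $\log \|T_k(E)\|$ arbitrarily large at those sites and yielding a strictly positive Lyapunov exponent $\gamma(E) > 0$. By a standard transfer-matrix (or Combes--Thomas type) estimate, $\varphi_N$ therefore decays exponentially away from $k_s$, so $|\varphi_N(n_N \pm N)| \lesssim e^{-\gamma(E)(N - |k_s|)} \to 0$. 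Making the simultaneous realization ``$\lambda_{j^*}(y^*_N) = E$'' and ``resonant site interior to the window'' rigorous, by combining the IVT construction with density of orbit segments, is the central technical step.
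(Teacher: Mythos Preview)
Your boundary-control step is a genuine gap. The claim that orbit points visiting neighborhoods of the singularity ``yield a strictly positive Lyapunov exponent $\gamma(E)>0$'' is not justified: large individual one-step transfer matrices say nothing about cancellation in the products, and under $(\F_1)$ alone the Lyapunov exponent need not even be finite (the paper requires $(\F_3)$ for that). More seriously, even granting $L(E)>0$, there is no ``standard transfer-matrix or Combes--Thomas type estimate'' forcing a box eigenvector to decay exponentially from a designated interior site $k_s$ to the boundary. Combes--Thomas controls resolvents at energies with positive distance to the spectrum, not eigenvectors at eigenvalues; what you are actually invoking is Anderson localization, which is the content of Sections~3--7 under the much stronger hypotheses $(\F_1)$--$(\F_3)$ together with a Diophantine condition on $\alpha$. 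Without the bound $|\varphi_N(n_N \pm N)| \to 0$, the Weyl-sequence argument does not close, and your final paragraph concedes this without supplying the missing idea.

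The paper avoids this difficulty entirely by rational approximation. It first shows $\bigcup_x \sigma(H(x)) = \R$ for rational $\alpha$ via Floquet theory and continuous eigenvalue curves; your remark that $\sigma(H(x)) \neq \R$ for a \emph{single} rational phase is correct, and the paper indeed uses only the union spectrum as input. For irrational $\alpha$ it argues by contradiction: assume a gap $I$, remove the one site where the potential diverges (pass to $P_k^{\perp} H(k\alpha) P_k^{\perp}$) to recover strong resolvent compactness, and then extract from a sequence of rational-$\alpha$ generalized eigenfunctions a bounded solution of the limiting eigenvalue equation at some $E \in I$, contradicting the gap. No Lyapunov-exponent or eigenvector-decay estimate enters.
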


\begin{remark}
Condition $(\F_3)$ is necessary for the Lyapunov exponent of $H$ to be finite, and is required for several techniques involved in the proof of Theorem \ref{main}, including Thouless formula, Kotani theory, and large deviation theorems.
\end{remark}
\begin{remark}
In Condition $(\F_1)$, one can assign any values $-\infty\le f(0+)<f(1-)\le +\infty$ without significant changes in the proof. In particular, while we may lose continuity of the eigenvalue curves $\Lambda_j$, the expression \eqref{lambdaj_def} still gives correct Lipschitz monotone parametrization of the box eigenvalues. In the bounded case, it shows that one can {\it drop the upper Lipschitz requirement in \cite{JK}}. Moreover, one can allow discontinuous $f$ as long as there are no resonances between the discontinuity points (no distance between two such points should belong to $\Z+\alpha \Z$).
\end{remark}
\begin{remark}
We would like to emphasize that monotonicity is not required for Theorem \ref{th_spec_R}. This shows that unbounded potentials behave differently from the bounded case, where different directions of monotonicity can create resonances and spectral gaps. We believe that the reason is that, generically, any horizontal line intersects the graph of a function satisfying $\F_1$ odd number of times.
\end{remark}
\begin{remark}
The results and methods of the present paper, as well as \cite{JK}, establish small coupling localization, which also holds for one-dimensional Anderson model and is much less typical for quasiperiodic operators. One can draw some parallels not only in the results, but also in proofs of localization. For example, Theorem \ref{main_ids} provides the same statement as Wegner's bound for a random potential with distribution function $f^{-1}$ (with the same Lipschitz constant), and Proposition \ref{loc_sufficient} is a version of spectral averaging.
\end{remark}
{\noindent \bf Structure of the paper. }Section 2 contains main definitions from spectral theory of ergodic operators, with some references specific for the unbounded case. In Section 3, we discuss upper bounds on transfer matrices. We show that an analogue of Furman's theorem holds modulo an unbounded ``tail'' which has explicit dependence on $f$. In Section 4, we show that Schr\"odinger operators with Lipschitz monotone potentials always have Lipschitz continuous IDS. The new proof is shorter and holds in more general context than the one provided in \cite{JK}. In particular, it does not use upper Lipschitz bound and provides a natural Lipschitz constant predicted by Wegner's bound.
In section 5, we establish more delicate claims about box eigenvalue counting function which is an ingredient for the large deviation theorem. We show that, if one compares box eigenvalue counting functions for two values of $x$, they can differ at most by an absolute constant, independent of the size of the box.
In Section 6, we use that observation to establish a large deviation theorem: namely, we identify the contribution of box eigenvalues that cause $\frac{1}{n}\log|\det (H_n(x)-E)|$ to deviate from it average value $L(E)$. The contribution from eigenvalues close to $E$ forms a well controlled large deviation set. The contribution from very large eigenvalues forms a ``tail''. The key observation is the following: in the Cramer's representation of Green's function through a ratio of determinants, the tail in the denominator always cancels the tail in the numerator. Therefore, one does not need to worry about abnormally large eigenvalues. After all preparations, the proof of Theorem \ref{main} is completed in Section 7 in the standard way. 

Section 8 is independent from Sections 3 -- 7 and contains the proof of Theorem \ref{th_spec_R}. The argument is relatively straightforward for $\alpha\in \Q$. The main difficulty is rational approximation: the standard proofs for the bounded case \cite{avron_simon_ids,bellissard_simon_cantor} require compactness of the set of phases on $\T^1$ for which $H(x)$ is defined. In our case, we have to avoid $x\in \Z+\alpha \Z$. To overcome this difficulty, we remove the lattice site on which the potential becomes unbounded, and consider the operator restricted to a smaller subspace. While we cannot claim that these restricted operators have the same spectrum as the rest of the ergodic family, their properties are sufficient to obtain a contradiction if one assumes that the original family has a gap.

The parameter $\gamma$ plays the role of a coupling constant. It is interesting to compare the regimes of small coupling with \cite{JK}. In both cases, we have a simple lower bound on the Lyapunov exponent for $\gamma>2e$ (large coupling), but the case of small coupling is treated by an implicit argument that guarantees that $L(E)>0$ for Lebesgue almost every energy. In \cite{JK}, the reason is that discontinuous $f$ lead to non-deterministic potentials \cite{DamanikKillip}, and in the present paper we rely on a spectral result \cite{SS} and Kotani theory. It is not clear whether \cite{DamanikKillip} can be extended to the unbounded case, or if there is an independent proof of uniform positivity of $L(E)$. In both cases, small coupling still demonstrates localization for almost every $x$, due to spectral averaging.

\section{Preliminaries from spectral theory}
In this section, we formulate some basic results from spectral theory and dynamics, and establish immediate consequences for the model \eqref{h_def}. Let 
$$
H_n(x)=\l.\one_{[0,n-1]}H(x)\r|_{\ran \one_{[0,n-1]}}
$$
be the $(n\times n)$-block of $H(x)$, or equivalently the Dirichlet restriction of the operator $H(x)$ onto $[0,n-1]\cap \Z$. Denote by $N_n(x,E)$ the counting function of the eigenvalues of $H_n(x)$ (defined for $x\in[0,1)$ except for finitely many points):
$$
N_n(x,E)=\#\sigma(H_n(x))\cap (-\infty,E].
$$
The {\it integrated density of states} of the operator family $H(x)$ is defined as
\beq
\label{ids_def}
N(E)=\lim\limits_{n\to \infty}\frac{1}{n}\int_{[0,1)}N_n(x,E)\,dx.
\eeq
Note that one can use any boundary conditions in the definition of $N_n(x)$ (for example, replace the Dirichlet restriction $H_n(x)$ by periodic restriction), as they differ by a finite rank perturbation. The function $N(\cdot)$ is monotone and continuous, and its derivative defines a probability measure on $dN(E)$ on $\mathbb R$, which is called the {\it density of states measure}. The topological support of $dN$ is equal to the spectrum of $H(x)$. Note that $\sigma(H(x))$ does not depend on $x$ as a set, see \cite{CFKS}*{Chapters 9,10} and Proposition \ref{lemma_independence}. It is also well known that $dN$ is the average of spectral measures of $H(x)$:
\beq
\label{ids_spectral}
dN(E)=\int_{[0,1)}\langle d\mathbb{E}_{H(x)}(E)e_0,e_0 \rangle\,dx.
\eeq
If one removes the hopping term in \eqref{h_def}, it is well known and immediately follows from ergodic theorems that $N(E)$ becomes $f^{-1}(E)$. If one defines the inverse function $N^{-1}\colon (0,1)\to \R$, it is easy to see from elementary perturbation arguments that
\beq
\label{ids_inv}
|N^{-1}(x)-f(x)|\le \|\Delta\|=2,\quad x\in (0,1).
\eeq

Denote by $M_n(x,E)$ the $n$-step transfer matrix,
\beq
\label{m_def}
M_n(x,E):=\prod_{l=(n-1)}^0\begin{pmatrix}E-f(x+l\alpha)&-1\\1&0 \end{pmatrix}=\begin{pmatrix}P_n(x,E)&-P_{n-1}(x+\alpha,E)\\ P_{n-1}(x,E)&-P_{n-2}(x+\alpha,E)\end{pmatrix},
\eeq
where $P_n(x,E)=\det H_n(x,E)$. Due to $\F_3$, the Lyapunov exponent
\beq
\label{gamma_def1}
L(E)=\lim_{n\to\infty}\frac{1}{n}\int_{[0,1)}\ln\|M_n(x,E)\|\,dx=
\inf_{n\in \N}\frac{1}{n}\int_{[0,1)}\ln\|M_n(x,E)\|\,dx
\eeq
is well defined and finite. It will later be shown that the density of states measure $dN$ has bounded density. Since  \eqref{ids_inv} implies $\log(1+|N^{-1}|)\in L^1(0,1)$, the integral in the right hand side of the Thouless formula
\beq
\label{thouless}
L(E)=\int_{\R} \log|E-E'|\,dN(E').
\eeq
is also well defined, and the formula holds.
%Together with  \eqref{ids_inv} and (f2), this implies that $L(\cdot)$ is continuous in $E$.
\begin{prop}
\label{noac}
For every $x$, $\sigma_{\mathrm{ac}}(H(x))=\varnothing$. As a consequence, $L(E)>0$ for Lebesgue a.~e. $E\in \R$.
\end{prop}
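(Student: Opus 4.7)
The plan is to split the proposition into two standard assertions and invoke two classical black boxes from one-dimensional spectral theory: the Simon--Spencer theorem to kill the absolutely continuous spectrum for each $x$, and Kotani theory to transfer this fact to the positivity of the Lyapunov exponent for Lebesgue-a.e.\ energy. Throughout I treat the case of irrational $\alpha$, which is implicit in the setting (for rational $\alpha$ the orbit is finite and the operator is periodic bounded, which is a different situation).

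For the first part, fix $x\notin \Z+\alpha\Z$ and consider the sampled potential $V(n)=f(x+n\alpha)$. Since $\alpha$ is irrational, Weyl equidistribution yields sequences $n_k^{\pm}\to\pm\infty$ with $x+n_k^{\pm}\alpha\bmod 1\to 0$; by $(\F_1)$ this forces $|V(n_k^{\pm})|\to\infty$ along both sequences, so
$$
\limsup_{n\to+\infty}|V(n)|=\limsup_{n\to-\infty}|V(n)|=+\infty.
$$
The Simon--Spencer theorem (the spectral result referenced in the introduction as \cite{SS}) then gives $\sigma_{\mathrm{ac}}(H(x))=\varnothing$. The mechanism is that sites with enormous $|V(n)|$ act as approximate decouplers, destroying the bounded polynomially-growing solutions required for a.c.\ spectrum via the Gilbert--Pearson / Schnol criterion.

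For the second part, I invoke Kotani theory for ergodic one-dimensional Schr\"odinger operators, which asserts that the essential closure of $Z:=\{E\in\R:L(E)=0\}$ coincides with $\sigma_{\mathrm{ac}}(H(x))$ for a.e.\ $x$. Combining with the first part, $Z$ has empty essential closure, hence Lebesgue measure zero, which is precisely the stated consequence.

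The main obstacle is checking that both black boxes apply in the unbounded setting. The Simon--Spencer argument is local in character and goes through verbatim once self-adjointness of $H(x)$ on the natural core is in hand (which one gets from $(\F_1)$ since the potential is finite at every integer translate of $x$, together with a standard limit-point criterion). Kotani theory, on the other hand, is classically stated for bounded ergodic potentials, so one has to appeal to (or quote, as the paper intends to do at the start of Section~2) its extension to the unbounded setting under $(\F_3)$, which guarantees finiteness of $L(E)$ and validity of the Thouless formula \eqref{thouless}; with these in place the proof of the Kotani dichotomy is unchanged.
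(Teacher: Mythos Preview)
Your proposal is correct and follows essentially the same approach as the paper: the absence of a.c.\ spectrum is deduced from the Simon--Spencer result \cite{SS} (you even spell out the equidistribution step ensuring unboundedness of the sampled potential on both half-lines), and the a.e.\ positivity of $L(E)$ is then obtained via Kotani theory, with the extension to unbounded potentials supplied by \cite{SimonKotUB}. Your additional remarks on the irrational/rational dichotomy and on the role of $(\F_3)$ in making Kotani theory applicable are accurate and only add detail to what the paper sketches.
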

\begin{proof}
The first claim is established in \cite{SS} for all Schr\"odinger operators with unbounded potentials by a deterministic argument. The second claim is a consequence of Kotani theory; the standard references for the bounded case are \cites{CFKS,Kot,SimonKot}, and the extension to the unbounded case is described in \cite{SimonKotUB}*{Appendix II}. See also the remark in the end of \cite{SS}*{Section 1}.
\end{proof}

The following simple lemma immediately follows from \cite{JK}*{Lemma 5.4}. The notation $\gamma^{-1}$ is chosen to match the actual property that will be obtained for $H(x)$ in Theorem \ref{main_ids}.
\begin{lemma}
\label{lyaplip}
Suppose $f\in \F({\gamma})$, and assume that $N(\cdot)$ is Lipschitz continuous: $|N(E)-N(E')|\le \gamma^{-1} |E-E'|$ for all $E,E'\in \R$. Then $L(E)$ is continuous on $\mathbb R$, and
\beq
\label{lyapbound}
L(E)\ge \max\{0,\log(\gamma/2e)\}.
\eeq
\end{lemma}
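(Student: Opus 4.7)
The plan is to reduce everything to the Thouless formula \eqref{thouless}, which is available because $(\F_3)$ guarantees the integrability needed to write $L(E) = \int_\R \log|E-E'|\, dN(E')$. Once this representation is in hand, both conclusions of the lemma are classical facts about logarithmic potentials of measures with bounded density.

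For the continuity of $L$, I would fix $E_0\in\R$ and a small $\delta>0$, and split the potential as
\[
L(E) = \int_{|E'-E_0|<\delta}\log|E-E'|\,dN(E') + \int_{|E'-E_0|\ge \delta}\log|E-E'|\,dN(E').
\]
On the outer piece, for $E$ close to $E_0$ the integrand is a continuous function of $E$ dominated by $C+\log(1+|E'|)$, which is $dN$-integrable by \eqref{ids_inv} and $(\F_3)$; so dominated convergence handles this piece. On the inner piece, Lipschitz continuity of $N$ gives $dN\le \gamma^{-1}\,dE'$, and hence
\[
\Bigl|\int_{|E'-E_0|<\delta}\log|E-E'|\,dN(E')\Bigr|\le \gamma^{-1}\int_{|u|\le 2\delta}|\log|u||\,du,
\]
uniformly in $E$ near $E_0$, which is $o(1)$ as $\delta\to 0$. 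This yields continuity.

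For the lower bound, the key observation is that $dN$ is a probability measure with density $\rho(E')\le \gamma^{-1}$. The logarithmic potential $\int \log|E-E'|\,d\mu(E')$, viewed as a functional on probability measures $\mu$ with $d\mu/dE'\le \gamma^{-1}$, is minimized when mass is concentrated as close to $E$ as possible (a standard symmetrization/rearrangement argument: moving any mass from a further point $E'_2$ to a closer point $E'_1$ strictly decreases $\log|E-E'|$). Since the density cap is $\gamma^{-1}$ and the total mass is $1$, the extremal configuration is $d\mu=\gamma^{-1}\one_{[E-\gamma/2,\, E+\gamma/2]}\,dE'$, which gives
\[
L(E)\ge \gamma^{-1}\int_{-\gamma/2}^{\gamma/2}\log|u|\,du = \log(\gamma/2)-1 = \log(\gamma/2e).
\]
Combined with the trivial bound $L(E)\ge 0$ (since each factor in $M_n$ has determinant $1$, so $\|M_n\|\ge 1$), this gives \eqref{lyapbound}.

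The main obstacle is mostly bookkeeping: verifying that the Thouless formula is truly applicable in the unbounded setting (already asserted in the paper just above the lemma, using $(\F_3)$ and \eqref{ids_inv}) and that the rearrangement/minimization step is rigorous. Since the paper cites \cite{JK}*{Lemma~5.4}, I expect the rearrangement argument to be already spelled out there, so in practice the proof is a two-line invocation of that lemma plus the observation that Thouless' formula is available under our hypotheses.
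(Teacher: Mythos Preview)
Your proposal is correct and follows the same approach as the paper: continuity via convergence of the Thouless integral (you spell out the near/far split that the paper leaves implicit), and the lower bound via the rearrangement argument identifying $dN=\gamma^{-1}\one_{[E-\gamma/2,E+\gamma/2]}\,dE'$ as the extremal case, which is exactly what the paper invokes from \cite{JK}*{Lemma~5.4}. Your final paragraph even anticipates the paper's proof almost verbatim.
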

\begin{proof}
Continuity of $L(E)$ follows from the convergence of the integral \eqref{thouless}. The lower bound follows from the computation in \cite{JK}*{Lemma 5.4}, which is essentially an observation that the worst possible case is $dN(E)=\gamma^{-1}\one_{[-\gamma/2,\gamma/2]}(E)\,dE$.
\end{proof}
A {\it generalized eigenfunction} is a formal solution of the equation $H(x)\psi=E\psi$, satisfying a polynomial bound $\psi(n)\le C(1+|n|)^C$. It is well known that the set of generalized eigenvalues (that is, values of $E$ for which non-trivial generalized eigenfunctions exist) of a self-adjoint Schr\"odinger operator has full spectral measure (see, for example, \cites{Berez1,Simon_semi,Snol,RuiShnol}). The following proposition establishes sufficient conditions to obtain purely point spectrum for an operator family with almost everywhere positive Lyapunov exponent, see also \cite{JK}*{Section 8}.
\begin{prop}
\label{loc_sufficient}
Suppose that the operator family $H(x)$ satisfies the following:
\begin{enumerate}
	\item The density of states measure $dN$ is absolutely continuous with respect to Lebesgue measure.
	\item $L(E)>0$ for Lebesgue a.~e./all $E\in \R$.
	\item For every $(x,E)$ with $L(E)>0$, any generalized eigenfunction $H(x)\psi=E\psi$ belongs to $\ell^2(\Z)$.
\end{enumerate}
Then $\sigma(H(x))$ is purely point for a.~e./all $x\in \R\setminus(\Z+\alpha\Z)$.
\end{prop}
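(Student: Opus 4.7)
The plan is to combine spectral averaging with a Schnol-type argument, using assumption (3) to upgrade polynomial bounds to $\ell^2$ decay. First, I would isolate the exceptional energy set $S=\{E\in\R: L(E)=0\}$. Assumption (2) gives $|S|=0$ in Lebesgue measure, and assumption (1) then upgrades this to $dN(S)=0$.

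The key step is to extend the spectral averaging identity \eqref{ids_spectral} from the single vector $e_0$ to every lattice vector $e_k$, $k\in\Z$. This follows from the covariance $U H(x)U^{-1}=H(x-\alpha)$, where $U$ denotes the unit shift on $\ell^2(\Z)$, combined with translation invariance of Lebesgue measure on $[0,1)$, giving
\begin{equation*}
\int_{[0,1)}\langle\mathbb{E}_{H(x)}(B)\,e_k,e_k\rangle\,dx=dN(B)
\end{equation*}
for every Borel $B\subset\R$ and every $k\in\Z$. Taking $B=S$ produces, for each $k$, a full-measure set $X_k\subset[0,1)$ on which $\langle\mathbb{E}_{H(x)}(S)e_k,e_k\rangle=0$. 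Intersecting over the countable family $k\in\Z$ and using that $\mathbb{E}_{H(x)}(S)$ is a positive operator whose vanishing on the dense span of $\{e_k\}$ forces it to vanish entirely, I obtain a full-measure $X\subset[0,1)$ with $\mathbb{E}_{H(x)}(S)=0$ for all $x\in X$.

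Next I would invoke Schnol's theorem \cites{Berez1,Simon_semi,Snol,RuiShnol}: the spectral measure of $H(x)$ is carried by the set of generalized eigenvalues. For $x\in X$ this measure lives on $\R\setminus S=\{E:L(E)>0\}$, where hypothesis (3) promotes every generalized eigenfunction to an honest $\ell^2$ eigenfunction. Since $H(x)$ admits at most countably many $\ell^2$ eigenvalues, the spectral measure is purely atomic, i.e., $\sigma(H(x))$ is pure point for a.e. $x\in[0,1)$ and hence for a.e. $x\in\R\setminus(\Z+\alpha\Z)$ by $1$-periodicity.

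For the stronger ``all $x$, all $E$'' variant the set $S$ is empty and the averaging step is unnecessary: Schnol's theorem combined with (3) directly yields pure point spectrum for every admissible $x$. The only subtlety I anticipate is verifying that the Schnol and spectral averaging machinery still apply for unbounded $f$; this is handled by the references already cited in Section~2, since the arithmetic of the argument itself is insensitive to boundedness of the potential.
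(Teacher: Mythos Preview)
Your proposal is correct and follows essentially the same route as the paper's proof: use assumptions (1)--(2) to conclude $dN(S)=0$ for $S=\{E:L(E)=0\}$, invoke the spectral averaging identity \eqref{ids_spectral} to make $S$ spectrally null for a.e.\ $x$, and then apply Schnol together with (3). Your explicit extension of \eqref{ids_spectral} from $e_0$ to every $e_k$ via covariance is a useful clarification that the paper leaves implicit; otherwise the arguments coincide.
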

\begin{proof}
Since $dN$ is absolutely continuous, the set of energies for which $L(E)=0$ has density of states measure zero. Hence, due to \eqref{ids_spectral}, it has spectral measure zero for a.~e. $x$. Property 3 implies that, for the remaining full measure set of values of $x$, generalized eigenfunctions of $H(x)$ form a basis in $\ell^2(\Z)$.
\end{proof}
\section{Preliminaries from dynamics: upper bounds on transfer matrices}
Let $(X,\mu,T)$ be a uniquely ergodic dynamical system (for the purposes of this paper, one can assume $X=\T^1$, $Tx=x+\alpha$, $\alpha\in \R\setminus \mathbb Q$, and $\mu$ is the Haar measure). A {\it sub-additive cocycle} on $(X,T,\mu)$ is a family of functions 
$$
h=\{h_n\in L^1(X,\mu)\}_{n\in\mathbb N},\quad h_{n+m}(x)\le h_n(x)+h_m(T^n x).
$$
In this case, one can define the Lyapunov exponent
$$
\Lambda(h)=\lim\limits_{n\to +\infty}\frac{1}{n}\int_X h_n\,d\mu=\inf\limits_{n\to +\infty}\frac{1}{n}\int_X h_n\,d\mu,
$$
which exists due to Kingman's subadditive ergodic theorem. An example of a sub-additive cocycle on $\T^1$ is $\log\|M_n(x,E)\|$ for fixed $E$, and the definition of $\Lambda$ agrees with \eqref{gamma_def1}. The following uniform upper bound \cite{Fu}*{Theorem 1} result will be important (see also \cite{LanaMavi} for a parametric version and some generalizations).
\begin{prop}
\label{furman}
Let $\{h_n\}$ be be a sub-additive cocycle on a uniquely ergodic system $(X,T,\mu)$. Assume, in addition, that $h_n\colon X\to\R$ are continuous and uniformly bounded in $L^1(X,\mu)$. Then, for every $\varepsilon>0$, there exists $N(\varepsilon)$ such that
$$
\frac{1}{n} h_n(x)< \Lambda(h)+\varepsilon,\quad \forall n>N(\varepsilon).
$$
If $h$ and $\Lambda(h)$ continuously depend on a real parameter on a compact interval, then $N(\varepsilon)$ can be chosen uniformly in that parameter.
\end{prop}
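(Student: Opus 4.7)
The plan is to reduce the sup bound on $h_n$ to a Birkhoff-type statement for a single continuous function, and then invoke the uniform Birkhoff convergence that unique ergodicity provides. Fix $\varepsilon>0$. By the infimum formula for $\Lambda(h)$, choose $m\in\N$ such that
\[
\frac{1}{m}\int_X h_m\,d\mu<\Lambda(h)+\frac{\varepsilon}{3}.
\]
For any $i\in\{0,1,\ldots,m-1\}$ and $n>i+m$, iterated subadditivity along the orbit of $x$ gives
\[
h_n(x)\le h_i(x)+\sum_{j=0}^{k_i-1}h_m\bigl(T^{i+jm}x\bigr)+h_{r_i}\bigl(T^{i+k_im}x\bigr),
\]
where $n-i=k_im+r_i$ with $0\le r_i<m$, and $h_0\equiv 0$ by convention. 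Since $h_n(x)$ is one fixed number and each of these $m$ inequalities holds separately, their arithmetic mean over $i=0,\ldots,m-1$ is also a valid upper bound.

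The key observation is that when one reindexes the resulting double sum via $\ell=i+jm$ with $i=\ell\bmod m$, it covers each $\ell\in\{0,1,\ldots,n-1\}$ except for at most $m-1$ indices lying in $[n-m+1,n-1]$. Hence
\[
\frac{1}{nm}\sum_{i=0}^{m-1}\sum_{j=0}^{k_i-1}h_m\bigl(T^{i+jm}x\bigr)=\frac{1}{m}\cdot\frac{1}{n}\sum_{\ell=0}^{n-1}h_m\bigl(T^\ell x\bigr)+O\!\Bigl(\tfrac{\|h_m\|_\infty}{n}\Bigr),
\]
and unique ergodicity of $T$, applied to the single continuous function $h_m$ on the compact space $X$, guarantees uniform convergence of the Birkhoff average on the right to $\int h_m\,d\mu$. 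The boundary terms $h_i(x)$ and $h_{r_i}(T^{i+k_i m}x)$ from the subadditive decomposition, once averaged over $i$ and divided by $n$, contribute $O(1/n)$ uniformly in $x$, since they involve only the finitely many bounded continuous functions $h_0,\ldots,h_{m-1}$ on the compact $X$. Collecting the estimates, there exists $N(\varepsilon)$ such that, for all $n>N(\varepsilon)$ and all $x\in X$,
\[
\frac{h_n(x)}{n}\le\frac{1}{m}\int_X h_m\,d\mu+\frac{2\varepsilon}{3}<\Lambda(h)+\varepsilon.
\]

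The main obstacle is the mismatch between the sparse orbit sums $\sum_j h_m(T^{jm}x)$ that $m$-step subadditivity naturally produces and the consecutive Birkhoff sums whose uniform convergence is guaranteed by unique ergodicity of $T$: the map $T^m$ need not itself be uniquely ergodic, so one cannot apply Birkhoff directly along the sparse orbit. Averaging the decomposition over the $m$ starting shifts $i=0,\ldots,m-1$ is exactly the device that bridges the two, and this is the only nontrivial step. For the parametric version, the same $m$ and $N(\varepsilon)$ work once $\int h_m\,d\mu$, the sup norms $\|h_j\|_\infty$ for $j=0,\ldots,m$, and the rate of convergence in the unique-ergodicity statement are all continuous in the parameter; compactness of the parameter interval then yields uniform bounds.
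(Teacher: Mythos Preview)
The paper does not actually supply a proof of this proposition; it is quoted as \cite{Fu}*{Theorem 1} (Furman's theorem), with the parametric refinement attributed to \cite{LanaMavi}. There is therefore nothing in the paper to compare against beyond the citation.

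Your argument is correct and is essentially Furman's original proof: choose $m$ so that $\frac{1}{m}\int h_m\,d\mu$ is within $\varepsilon/3$ of $\Lambda(h)$, decompose $h_n$ subadditively into blocks of length $m$ with an initial offset $i$, and average over $i\in\{0,\dots,m-1\}$ so that the sparse sums $\sum_j h_m(T^{i+jm}x)$ assemble into a genuine Birkhoff sum $\frac{1}{n}\sum_{\ell=0}^{n-1}h_m(T^\ell x)$ modulo $m-1$ boundary terms. Unique ergodicity then gives the uniform upper bound. Your index bookkeeping is accurate: exactly $m-1$ indices are missing and they all lie in $[n-m+1,n-1]$, so the $O(\|h_m\|_\infty/n)$ error term is as claimed.

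For the parametric statement you are a bit terse. The point that deserves one more sentence is the uniform choice of $m$: since $t\mapsto\frac{1}{m}\int h_m^{(t)}\,d\mu$ is continuous, decreasing in $m$, and converges to the continuous limit $\Lambda(h^{(t)})$, Dini's theorem on the compact parameter interval gives a single $m$ that works for all $t$. After that, uniform Birkhoff convergence for the jointly continuous family $h_m^{(t)}$ on the compact product $X\times[\text{parameter interval}]$ yields the uniform $N(\varepsilon)$. What you wrote (``compactness of the parameter interval then yields uniform bounds'') covers this, but making the Dini step explicit would remove any doubt.
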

One cannot apply Theorem \ref{furman} directly to the transfer matrices \eqref{m_def}. However, one can consider the factorization
\beq
\label{factorization}
M_n(x,E)=F_n(x)G_n(x,E),
\eeq
where
$$
G_n(x,E)=\prod_{l=(n-1)}^0 \frac{1}{1+|f(x+l\alpha)-E|}\begin{pmatrix}E-f(x+l\alpha)&-1\\1&0 \end{pmatrix}
$$
and $F_n(x,E)$ is a scalar function
\beq
\label{eq_F_def}
F_n(x,E)=\prod_{l=0}^{n-1}(1+|f(x+l\alpha)-E|).
\eeq
As long as $\log|f|\in L^1(0,1)$, the Lyapunov exponents of logarithms of all three cocycles are bounded, and, by the additive ergodic theorem
$$
\Lambda(\log\|M\|)=\Lambda(\log|F|)+\Lambda(\log\|G\|)=\Lambda(\log\|G\|)+\int_0^1\log(1+|f(x)-E|)\,dx.
$$
On any finite energy interval, the cocycle $\log\|G\|$ satisfies the assumptions of Proposition \ref{furman}. Hence, while the statement of Proposition \ref{furman} obviously fails for $M$, the only way for this to happen is through $F$ being very large. We will quantify it through introducing a truncation of $f$. Let $B>0$. Define
$$
F^{>B}_n(x,E)=\prod_{l\in [0,n-1]\colon|f(x+l\alpha)-E|> B}(1+|f(x+l\alpha)-E|),
$$
$$
F^{\le B}_n(x,E)=\prod_{l\in [0,n-1]\colon|f(x+l\alpha)-E|\le B}(1+|f(x+l\alpha)-E|),
$$
so that $F_n(x)=F^{>B}_n(x,E)F_n^{\le B}(x,E)$. Let also $l(B,n,x,E)$ be the number of factors in $F_n^{>B}(x,E)$; that is, the number of points of irrational rotation with large values of $f$.
\begin{prop}
\label{prop_discrepancy}
Let $\alpha\in \dcct$ and $B,\varepsilon>0$. Then
\begin{multline}
\label{eq_discrepancy_1}
\l|\frac{1}{n}\log F_n^{\le B}(x,E)-\int\limits_{\{|f(y)-E|\le B\}} \log(1+|f(y)-E|)\,dy\r|\\
\le \frac{C(\alpha,\varepsilon)\log(1+B)}{n^{1/\tau-\varepsilon}}+\frac{2l(B,n,x,E)\log(1+B)}{n}.
\end{multline}
\end{prop}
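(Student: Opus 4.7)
The plan is to reduce the inequality to Koksma's discrepancy inequality applied to a suitable bounded-variation observable on $\T$, combined with the standard discrepancy estimate $D_n(x)\le C(\alpha,\varepsilon)\, n^{-1/\tau+\varepsilon}$ available for $\alpha\in\dcct$. The quantity to be estimated rewrites as
$$
\Bigl|\frac{1}{n}\sum_{l=0}^{n-1} h(x+l\alpha)-\int_0^1 h(y)\,dy\Bigr|,\qquad h(y):=\log(1+|f(y)-E|)\,\mathbf{1}_{\{|f(y)-E|\le B\}}(y),
$$
so the task is a quantitative Birkhoff estimate for the discontinuous observable $h$.

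Since $h$ has two jumps, where $|f-E|$ crosses the level $B$, I would pass to the continuous truncation
$$
h_B(y):=\log\bigl(1+\min(|f(y)-E|,B)\bigr).
$$
Using $f(0+)=-\infty$ and $f(1-)=+\infty$, both one-sided limits of $h_B$ at the singularity equal $\log(1+B)$, so $h_B$ extends continuously to $\T$; by monotonicity of $f$ the function $|f-E|$ is unimodal on $(0,1)$, so the total variation of $h_B$ on $\T$ is at most $2\log(1+B)$. The decomposition $h=h_B-\log(1+B)\,\mathbf{1}_{\{|f-E|>B\}}$ yields
$$
\frac{1}{n}\log F_n^{\le B}(x,E)=\frac{1}{n}\sum_{l=0}^{n-1}h_B(x+l\alpha)-\frac{\log(1+B)\,l(B,n,x,E)}{n}
$$
together with the parallel identity for $\int h$. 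Koksma's inequality applied to $h_B$ gives $|\frac{1}{n}\sum h_B(x+l\alpha)-\int h_B|\le 2\log(1+B)\,D_n(x)$, while Koksma applied to $\mathbf{1}_{\{|f-E|>B\}}$ (of total variation at most $2$) gives $|l(B,n,x,E)/n-|\{|f-E|>B\}||\le 2D_n(x)$.

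Subtracting the two decompositions and bounding the resulting indicator mismatch via the triangle inequality
$$
\bigl||\{|f-E|>B\}|-l(B,n,x,E)/n\bigr|\le |\{|f-E|>B\}|+\tfrac{l(B,n,x,E)}{n}\le \tfrac{2l(B,n,x,E)}{n}+2D_n(x),
$$
where the last step uses the indicator Koksma estimate to replace $|\{|f-E|>B\}|$ by $l(B,n,x,E)/n+2D_n(x)$, produces
$$
\Bigl|\frac{1}{n}\log F_n^{\le B}-\int h\Bigr|\le 4\log(1+B)\,D_n(x)+\frac{2l(B,n,x,E)\log(1+B)}{n}.
$$
The first summand is absorbed into $C(\alpha,\varepsilon)\log(1+B)\,n^{-1/\tau+\varepsilon}$ by the Diophantine discrepancy bound. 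The only mildly subtle point is the deliberate use of the crude triangle step above: one could instead apply the tight indicator Koksma bound directly and eliminate the $l(B,n,x,E)/n$ term, but the form retained here is exactly what the downstream large-deviation argument needs, where $B$ will be tuned so that $l/n$ is already small and appears naturally as the tail contribution.
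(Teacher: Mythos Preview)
Your proof is correct and follows essentially the same approach as the paper: both apply Koksma's inequality to a bounded-variation observable of total variation $O(\log(1+B))$ together with the Diophantine discrepancy bound $D_n\le C(\alpha,\varepsilon)n^{-1/\tau+\varepsilon}$, and both attribute the second term to the $l(B,n,x,E)$ orbit points lying in $\{|f-E|>B\}$. Your explicit decomposition $h=h_B-\log(1+B)\,\mathbf{1}_{\{|f-E|>B\}}$ simply makes precise what the paper's one-paragraph proof leaves implicit, and your closing observation---that a direct Koksma bound on the indicator would in fact eliminate the $l/n$ term and yield a sharper estimate---is correct, though the stated weaker form is what the downstream argument actually uses.
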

\begin{proof}
The estimate immediately follows from Koksma's inequality \cite{Kuipers}*{Theorem 2.5.1}, which estimates the difference between an integral and an integral sum through the variation of the integrand (which is the factor $\log(1+B)$) and discrepancy of the sampling set. In this case, we apply the estimate of discrepancy of the irrational rotation \cite{Kuipers}*{Theorem 2.3.2}, which gives the first term in the right hand side. The second term accounts for the fact that $l(B,n,x,E)$ irrational rotation points have been removed from consideration.
\end{proof}

 \begin{lemma}
 \label{lemma_uniform_upper}
 Fix $\alpha\in \dcct$. For $\varepsilon>0$, suppose that the sequence $\{B(n)\}_{n\in \N}$ is chosen to satisfy
 $$
 l(B(n),n,x)\log(1+B(n))=o(n),\quad \log(1+B(n))=o(n^{1/\tau}-\varepsilon),\quad B(n)\to \infty
 $$
 as $n\to \infty$. Then there exists $n_0(\varepsilon)$ such that, for $n>n_0(\varepsilon)$, we have
$$
\log\|M_n(x,E)\|\le n (L(E)+\varepsilon)+\log F_n^{>B(n)}(x,E).
$$
Moreover, $n_0(\varepsilon)$ can be chosen uniformly in $E$ on any compact interval.
\end{lemma}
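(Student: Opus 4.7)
The plan is to apply the factorization \eqref{factorization} and estimate the two factors separately, using Furman's theorem (Proposition \ref{furman}) for $G$ and the discrepancy estimate (Proposition \ref{prop_discrepancy}) for $F$. Writing
$$
\log\|M_n(x,E)\|\le \log F_n^{>B(n)}(x,E)+\log F_n^{\le B(n)}(x,E)+\log\|G_n(x,E)\|,
$$
the goal reduces to bounding the last two terms by $n(L(E)+\varepsilon)$. Since $L(E)=\Lambda(\log F)+\Lambda(\log\|G\|)$ by the additive splitting recorded after \eqref{eq_F_def}, with $\Lambda(\log F)=\int_0^1\log(1+|f(y)-E|)\,dy$, I want to show that each of the two terms is at most $n$ times the corresponding Lyapunov exponent plus $n\varepsilon/2$.

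For $\log\|G_n\|$, each factor of $G_n$ is uniformly bounded in norm (the ratio $(E-f)/(1+|f-E|)$ stays in $[-1,1]$ and has finite limits as $f\to\pm\infty$, so each factor extends to a continuous function on $\T^1$), hence the cocycle $\log\|G_n\|$ is continuous on $\T^1$ and uniformly bounded in $L^1$, jointly continuous in $E$. Proposition \ref{furman} then yields $n_1(\varepsilon)$, uniform on any compact $E$-interval, such that $\frac1n\log\|G_n(x,E)\|\le \Lambda(\log\|G\|)+\varepsilon/2$ for $n\ge n_1$.

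For $F_n^{\le B(n)}$, Proposition \ref{prop_discrepancy} gives
$$
\tfrac{1}{n}\log F_n^{\le B(n)}(x,E)\le \int_{\{|f(y)-E|\le B(n)\}}\!\!\log(1+|f(y)-E|)\,dy+\tfrac{C(\alpha,\varepsilon)\log(1+B(n))}{n^{1/\tau-\varepsilon}}+\tfrac{2l(B(n),n,x,E)\log(1+B(n))}{n}.
$$
The two error terms are $o(1)$ by the hypotheses on $B(n)$. For the integral, since $(\F_3)$ together with $|f-E|\le |f|+|E|$ guarantees that $\log(1+|f-E|)$ is dominated by an $L^1$ function uniformly for $E$ in any compact interval, dominated convergence as $B(n)\to\infty$ gives that this integral tends to $\int_0^1\log(1+|f(y)-E|)\,dy=\Lambda(\log F)$ uniformly in $E$. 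Picking $n_2(\varepsilon)$ accordingly, we get $\tfrac1n\log F_n^{\le B(n)}(x,E)\le \Lambda(\log F)+\varepsilon/2$ for $n\ge n_2$, uniformly on compact $E$-intervals. Adding the two bounds and multiplying by $n$ yields the claim with $n_0(\varepsilon)=\max\{n_1,n_2\}$.

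The only delicate point is ensuring uniformity in $E$: Furman gives it directly on any compact interval through joint continuity of $G_n$ and $\Lambda(\log\|G\|)$ in $E$, while the uniformity of the monotone/dominated convergence for the truncated integral relies on the uniform $L^1$ majorant from $(\F_3)$. Apart from that, the argument is a straightforward bookkeeping combining the two preliminary propositions.
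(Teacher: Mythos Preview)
Your argument is exactly the paper's: apply Proposition~\ref{furman} to $\log\|G\|$, Proposition~\ref{prop_discrepancy} to $F^{\le B}$, and absorb the error terms via the hypotheses on $B(n)$.

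One small inaccuracy: the individual factors of $G_n$ do \emph{not} extend continuously to $\T^1$. As $y\to 0+$ one has $f(y)\to-\infty$ and the factor tends to $\begin{psmallmatrix}1&0\\0&0\end{psmallmatrix}$, while as $y\to 1-$ it tends to $\begin{psmallmatrix}-1&0\\0&0\end{psmallmatrix}$; the one-sided limits differ by a global sign, so each factor has a jump on $\T^1$. What \emph{is} continuous is $\|G_n(x,E)\|$, since crossing the singular point of any one factor multiplies $G_n$ by $-1$ and leaves the norm unchanged (and $\|G_n\|>0$ everywhere because at most one factor can degenerate for irrational $\alpha$, and that factor is rank one, not zero). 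Hence $\log\|G_n\|$ is continuous on $\T^1$ and Proposition~\ref{furman} applies; just replace ``each factor extends to a continuous function on $\T^1$'' by this observation and the proof goes through.
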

\begin{proof}
Follows from Proposition \ref{furman} applied to $\log\|G\|$ and Proposition \ref{prop_discrepancy} applied to \eqref{eq_F_def}. The assumptions of the lemma guarantee that both correction terms from \eqref{eq_discrepancy_1} can be absorbed into $\varepsilon$, and the integral over $\{|f(y)-E|\le B\}$ can be replaced by the integral over $(0,1)$.
\end{proof}
\begin{remark}
The general conditions on $B(n)$ are provided for illustrations only. In our applications, it would be sufficient to take $B(n)\approx n$.	
\end{remark}

\section{Box eigenvalues and estimates of the IDS}
The goal of this section is to study the dependence on $x$ of the eigenvalues of $H_n(x)$, and obtain estimates for the IDS, using the definition involving counting functions. The following is one of the main results of this section.
\begin{theorem}
\label{main_ids}
Suppose $f$ satisfies Properties $(\F_1)$ and $(\F_2)$ for some $\gamma>0$. For every $\alpha\in \R\setminus\Q$, the integrated density of states of the operator family $H(x)$ is Lipschitz continuous:
\beq
\label{lip_ids}
|N(E)-N(E')|\le \gamma^{-1} |E-E'|.
\eeq
\end{theorem}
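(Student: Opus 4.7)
The plan is to reduce \eqref{lip_ids} to the pre-limit estimate
$$\int_0^1 \bigl[N_n(x, E') - N_n(x, E)\bigr]\,dx \le \frac{n}{\gamma}(E' - E),\qquad E<E',$$
after which dividing by $n$ and sending $n\to\infty$ gives the result. Writing the left-hand side as a sum over eigenvalue indices, the task reduces to parametrizing the $n$ eigenvalues of $H_n(x)$ as $x$ varies by a family of monotone Lipschitz curves with slope at least $\gamma$; the estimate is then immediate from the elementary fact that if $\Lambda$ is a real-valued function with slope $\ge\gamma$ on its domain, then $m(\{x:\Lambda(x)\in(E,E']\})\le\gamma^{-1}(E'-E)$.

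To build such a parametrization I would proceed in three steps. First, on each of the $n$ open subintervals of $\T$ obtained by removing the singularities $S_n=\{-l\alpha\bmod 1:\ l=0,\ldots,n-1\}$ of the diagonal entries, condition $(\F_2)$ implies $H_n(y)-H_n(x)\ge\gamma(y-x)I$ for $x<y$ in the same subinterval, so Weyl's monotonicity principle gives that each of the $n$ ordered eigenvalue branches is smooth and Lipschitz monotone with slope at least $\gamma$. Second, near each singular point $x_0=-l\alpha\bmod 1$, only the $l$-th diagonal entry of $H_n(x)$ diverges, going to $+\infty$ as $x\uparrow x_0$ and $-\infty$ as $x\downarrow x_0$; viewing $H_n(x)$ as a rank-one perturbation of the operator with the $l$-th diagonal entry removed and invoking standard interlacing, one sees that $n-1$ eigenvalues converge from both sides to the eigenvalues of the Dirichlet truncation at site $l$, while exactly one eigenvalue escapes to $+\infty$ from the left and exactly one emerges from $-\infty$ on the right. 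This lets us glue the $n-1$ bulk branches continuously through each $x_0$, producing one birth and one death per singularity. Counting births and deaths over the $n$ singularities on $\T$, we obtain exactly $n$ global monotone Lipschitz curves $\tilde\Lambda_1,\ldots,\tilde\Lambda_n$, each defined on an arc of $\T\setminus S_n$, each with slope $\ge\gamma$, and each sweeping the full real line from $-\infty$ to $+\infty$. Third, since $m(\{x:\tilde\Lambda_j(x)\in(E,E']\})\le\gamma^{-1}(E'-E)$ for every $j$, summing over $j=1,\ldots,n$ yields the desired pre-limit bound.

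I expect the main obstacle to lie in making the singularity analysis in the second step precise: one needs to justify cleanly that at each $x_0\in S_n$ exactly one eigenvalue of $H_n(x)$ runs off to $+\infty$ while the remaining $n-1$ admit continuous extensions, despite the fact that the relevant rank-one perturbation has infinite amplitude. This is most transparent by writing $H_n(x)=A^{(l)}+f(x+l\alpha)\,e_le_l^*$ where $A^{(l)}$ has the $l$-th diagonal entry set to zero, applying eigenvalue interlacing under one-parameter rank-one perturbations, and taking one-sided limits as $f(x+l\alpha)\to\pm\infty$. Once this local picture is secured, the global count of exactly $n$ curves is forced by the combinatorics of births and deaths, which in turn matches the normalization $1/n$ in \eqref{ids_def} and explains why the Lipschitz constant comes out to be $\gamma^{-1}$ independent of $n$.
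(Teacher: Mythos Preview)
Your proposal is correct and matches the paper's approach essentially step for step. The paper handles the singularity analysis you flagged via Lemma~\ref{pencils} (an approximate-eigenvector argument in place of your interlacing, but equivalent), then gives the explicit gluing formula $\Lambda_j(x)=E_{(j+l)\bmod n}(x)$ for $x\in(\beta_l,\beta_{l+1})$ for the continuous curves, and concludes with exactly your pre-limit inequality.
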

A similar statement is known for a bounded monotone potential, see \cite{JK}*{Theorem 3.1}. The argument in \cite{JK} relies on upper and lower bounds on $f$, and cannot be extended directly to the unbounded case. The proof of Theorem \ref{main_ids} will rely on a different argument and will require several preliminaries. Unlike other results in this paper, it does not rely on Diophantine properties of $\alpha$ or on the strength of the singularity of $f$.
\begin{lemma}
\label{pencils}
Let $A\colon [x_0,x_0+\varepsilon)\to \mnc$ be a continuous self-adjoint matrix-valued function. Suppose $f\colon (x_0,x_0+\varepsilon)\to \R$ is continuous, and $f(x_0+0)=\pm\infty$. Let $P$ be a rank one orthogonal projection. Then
$$
\lim\limits_{x\to x_0+}\sigma(A(x)+f(x)P)=\sigma\l(\l.(1-P)A(x_0)\r|_{\ran(1-P)}\r)\cup\{\pm\infty\}.
$$
where the convergence is understood in the sense of sets with multiplicities.
\end{lemma}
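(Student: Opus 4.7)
The plan is to reduce the statement to continuity of roots of a one-parameter family of polynomials. Writing $P = vv^*$ for a unit vector $v$ and working in an orthonormal basis whose first element is $v$, one has
$$
B(x) := A(x) + f(x) P = \begin{pmatrix} a(x) + f(x) & b(x)^* \\ b(x) & A'(x) \end{pmatrix},
$$
where $A'(x)$ is the compression $(1-P)A(x)(1-P)$ acting on $\ran(1-P)$. A Schur-complement expansion yields the identity
$$
p_x(\lambda) := \det(B(x) - \lambda I_n) = \det(A(x) - \lambda I_n) + f(x)\,\det(A'(x) - \lambda I_{n-1}).
$$

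Dividing by $f(x)$, which is nonzero for $x$ close to $x_0$, gives
$$
\frac{p_x(\lambda)}{f(x)} = \det(A'(x) - \lambda I_{n-1}) + \frac{\det(A(x) - \lambda I_n)}{f(x)}.
$$
As $x \to x_0+$, using continuity of $A$ and $|f(x)| \to \infty$, this converges to $q(\lambda) := \det(A'(x_0) - \lambda I_{n-1})$ uniformly for $\lambda$ in compact subsets of $\mathbb{C}$; the roots of $q$ are precisely the eigenvalues of $(1-P)A(x_0)|_{\ran(1-P)}$. The coefficient of $\lambda^n$ in $p_x/f(x)$ is $(-1)^n/f(x) \to 0$, while every lower-order coefficient converges to the corresponding coefficient of the degree-$(n-1)$ polynomial $q$. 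Continuity of polynomial roots under this kind of coefficient degeneration (an application of Hurwitz's theorem on a contour enclosing all roots of $q$) then implies that $n-1$ of the $n$ roots of $p_x$ converge, as unordered multisets with multiplicity, to the roots of $q$, while one root escapes to infinity.

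The direction in which the escaping root diverges is pinned down by the trace identity $\tr B(x) = \tr A(x) + f(x)$, which tends to $\pm\infty$ together with $f$ and forces the runaway eigenvalue to carry the sign of $f(x_0+0)$. Combining these observations yields the claim: the spectrum of $B(x)$ converges, as a multiset in $\R \cup \{\pm\infty\}$, to $\sigma(A'(x_0)) \cup \{\pm\infty\}$. The main technical subtlety is the ``with multiplicity'' assertion, which the polynomial/Hurwitz approach delivers automatically; a direct min-max or interlacing argument would also work but is more cumbersome when eigenvalues of $A'(x_0)$ coincide.
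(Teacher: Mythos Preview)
Your argument is correct. The determinant identity
\[
\det\bigl(B(x)-\lambda I_n\bigr)=\det\bigl(A(x)-\lambda I_n\bigr)+f(x)\,\det\bigl(A'(x)-\lambda I_{n-1}\bigr)
\]
follows from multilinearity of the determinant in the first column (or the matrix determinant lemma); calling it a ``Schur-complement expansion'' is a slight misnomer, but the formula is right. From there, the Hurwitz argument on $p_x/f(x)\to q$ cleanly gives convergence of $n-1$ roots with multiplicity, and the trace identity pins down the sign of the escaping eigenvalue.

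Your route is genuinely different from the paper's. The paper works directly with approximate eigenvectors: for each eigenvector $v$ of the compressed operator $B(x_0)=(1-P)A(x_0)|_{\ran(1-P)}$ with eigenvalue $\mu$, it builds a trial vector $(\varepsilon(x),v)^T$ with $\varepsilon(x)=-c(x)^Tv/(f(x)+f_0(x))=O(|f(x)|^{-1})$ and checks that $(A(x)+f(x)P)(\varepsilon(x),v)^T=\mu(\varepsilon(x),v)^T+o(1)$, so $\dist(\mu,\sigma(A(x)+f(x)P))\to 0$. What this buys is explicit information about the limiting eigenvectors and a completely elementary argument; its cost is that the multiplicity statement is awkward --- the paper falls back on ``without loss of generality the eigenvalues of $B(x_0)$ are distinct'' or an unspecified rewrite via spectral projections. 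Your characteristic-polynomial approach trades the eigenvector information for a clean handling of multiplicities via Hurwitz, which is exactly the point you flag at the end. Either proof is adequate for how the lemma is used downstream.
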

\begin{proof}
The $\pm\infty$ part follows from elementary rank one perturbation theory. There is exactly one eigenvalue escaping to $\pm\infty$, and the other ones remain bounded. Let $P=\langle \cdot,e\rangle e$. One can rewrite
$$
A(x)+f(x) P=\begin{pmatrix}
	f(x)+f_0(x)& c(x)^T\\
	c(x) & B(x)
\end{pmatrix},\quad B(x)=\l.(1-P)A(x)\r|_{\ran(1-P)},
$$
$$
c(x)=(1-P)A(x)e,\quad f_0(x)=\<A(x)e,e\>.
$$
Let also $v$ be an eigenvector of $B(x_0)=\l.(1-P)A(x_0)\r|_{\ran(1-P)}$, $B(x_0)v=\mu v$. For $x$ sufficiently close to $x_0$, let 
$$
\varepsilon(x)=-\frac{c(x)^T v}{f(x)+f_0(x)}=O(|f(x)|^{-1})=o(1).
$$ 
Then
$$
(A(x)+f(x) P)\begin{pmatrix}
	\varepsilon(x)\\
	v
\end{pmatrix}=\begin{pmatrix}
	f(x)+f_0(x)& c(x)^T\\
	c(x) & B(x)
\end{pmatrix}
\begin{pmatrix}
	\varepsilon(x)\\
	v
\end{pmatrix}
$$
$$
=\begin{pmatrix}
	0\\
	\varepsilon(x)c(x)+\mu v+(B(x)-B(x_0))v
\end{pmatrix}=
\mu \begin{pmatrix}
	\varepsilon(x)\\
	v
\end{pmatrix}
+o(1).
$$
This implies that $\dist(\mu,\sigma(A(x)+f(x) P))=o(1)$, and hence $\mu$ is a limit point of $\sigma(A(x)+f(x) P)$. Without loss of generality, one can assume that the eigenvalues of $B(x_0)$ are distinct, which completes the proof (alternatively, one can rewrite the argument using spectral projections).
\end{proof}
Fix some $n\in \N$, and denote by $E_j(x)$, $0\le j\le n-1$, the $j$-th eigenvalue of $H_n(x)$ in the increasing order. Let us denote the discontinuity points of the diagonal entries of $H_n(x)$ by
\beq
\label{beta_def}
\{\beta_0,\ldots,\beta_{n-1}\}=\{\{-j\alpha\},\,0\le j\le n-1\},\quad 0=\beta_0<\beta_1\ldots<\beta_{n-1}<\beta_n=1.
\eeq
One can identify $\beta_0$ with $\beta_n$, as all functions are $1$-periodic; however, the notation \eqref{beta_def} will still be convenient.
\begin{lemma}
\label{jumps}
The functions $E_j(x)$ are continuous and Lipschitz monotone on the intervals $(\beta_l,\beta_{l+1})$, $0\le l\le n-1$. Moreover, 
\beq
\label{jumps1}
E_0(\beta_l+0)=-\infty,\,\, E_{n-1}(\beta_l-0)=+\infty, \quad 0\le l\le n;
\eeq
\beq
\label{jumps2}
E_j(\beta_l-0)=E_{j+1}(\beta_l+0),\quad 1\le j\le n-2, \quad 0\le l\le n.
\eeq
\end{lemma}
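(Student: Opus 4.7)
My plan is to separate the lemma into two parts: interior behavior on each $(\beta_l,\beta_{l+1})$ (continuity and Lipschitz monotonicity), and the one-sided limits at the endpoints $\beta_l$ (the identities \eqref{jumps1} and \eqref{jumps2}).

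\emph{Interior.} For $x\in(\beta_l,\beta_{l+1})$, each number $x+j\alpha$, $0\le j\le n-1$, lies in the interior of one of the unit cells where $f$ is continuous; indeed, the $\beta_l$ were defined precisely as the set of $x$ where some $x+j\alpha$ crosses an integer. Hence every diagonal entry of $V(x):=\diag(f(x+j\alpha))_{j=0}^{n-1}$ is continuous on $(\beta_l,\beta_{l+1})$, $H_n(x)$ is norm-continuous, and finite-dimensional perturbation theory yields continuity of each $E_j(x)$. For Lipschitz monotonicity, property $(\F_2)$ together with $1$-periodicity of $f$ gives, for $x<y$ in the same interval $(\beta_l,\beta_{l+1})$ and every $j$,
$$
f(y+j\alpha)-f(x+j\alpha)\ge \gamma(y-x),
$$
because the monotonicity argument above shows $x+j\alpha$ and $y+j\alpha$ belong to the same open unit interval of continuity of $f$. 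Therefore $V(y)-V(x)\ge\gamma(y-x)I$ as matrices, and since the Laplacian part of $H_n$ is independent of $x$,
$$
H_n(y)-H_n(x)\ge\gamma(y-x)I.
$$
The min-max principle then yields $E_j(y)-E_j(x)\ge\gamma(y-x)$ for each $j$.

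\emph{Boundary limits.} Fix $\beta_l$ and let $j_l\in\{0,\ldots,n-1\}$ be the unique index with $\{-j_l\alpha\}=\beta_l$. This is the only site whose potential has a singularity at $x=\beta_l$: by $(\F_1)$, $f(x+j_l\alpha)\to+\infty$ as $x\to\beta_l^-$ and $f(x+j_l\alpha)\to-\infty$ as $x\to\beta_l^+$, while every other diagonal entry is continuous at $\beta_l$. Writing
$$
H_n(x)=A(x)+f(x+j_l\alpha)P_{j_l},
$$
with $P_{j_l}$ the rank-one projection onto the $j_l$-th basis vector and $A$ continuous at $\beta_l$, Lemma \ref{pencils} applied separately to the left and right intervals gives
$$
\lim_{x\to\beta_l^\pm}\sigma(H_n(x))=\sigma(B(\beta_l))\cup\{\pm\infty\},
$$
where $B(\beta_l):=(1-P_{j_l})A(\beta_l)|_{\ran(1-P_{j_l})}$ has exactly $n-1$ eigenvalues $\mu_1\le\mu_2\le\ldots\le\mu_{n-1}$.

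Reading off the ordered eigenvalue lists produces the identities. On the left, the list of limits in increasing order is $(\mu_1,\ldots,\mu_{n-1},+\infty)$, so $E_j(\beta_l-0)=\mu_{j+1}$ for $0\le j\le n-2$ and $E_{n-1}(\beta_l-0)=+\infty$. On the right, it is $(-\infty,\mu_1,\ldots,\mu_{n-1})$, so $E_0(\beta_l+0)=-\infty$ and $E_j(\beta_l+0)=\mu_j$ for $1\le j\le n-1$. Comparing entries gives \eqref{jumps1} and \eqref{jumps2} at once. The main conceptual hurdle is the matching of finite eigenvalues across the jump --- showing that exactly one eigenvalue escapes to $\pm\infty$ while the remaining $n-1$ are in order-preserving bijection with $\sigma(B(\beta_l))$ --- but this is precisely what Lemma \ref{pencils} delivers (convergence as sets with multiplicities), so the bookkeeping is then automatic.
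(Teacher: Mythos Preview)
Your proof is correct and follows essentially the same route as the paper: both arguments reduce the boundary behavior to Lemma~\ref{pencils}, identifying the single diverging diagonal entry at each $\beta_l$ and matching the remaining $n-1$ eigenvalues to $\sigma(B(\beta_l))$ on either side. The paper's proof is a two-sentence sketch (``all claims follow from Lemma~\ref{pencils}''), whereas you have spelled out the interior Lipschitz monotonicity via the min--max principle and the eigenvalue bookkeeping explicitly; in particular your derivation actually gives \eqref{jumps2} for $0\le j\le n-2$, slightly more than the range stated.
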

\begin{proof}
All claims follow from Lemma \ref{pencils}. At each point of discontinuity, there is a matrix element $f(x-\beta_l)$ that approaches $\pm\infty$, and the other matrix elements remain continuous and bounded. The re-numbering in \eqref{jumps2} is caused by one eigenvalue changing from very large negative to very large positive.
\end{proof}
As a consequence, all discontinuities in $E_j(x)$ are either caused by an eigenvalue diverging to infinity, or can be fixed by suitable re-numbering. We can define {\it continuous eigenvalue curves} by
\beq
\label{lambdaj_def}
\Lambda_j(x)=E_{(j+l)\,\mathrm{mod}\, n}(x),\quad x\in (\beta_l,\beta_{l+1}), \quad 0\le j,l\le n-1.
\eeq
\begin{cor}
\label{Sinai1}
\begin{enumerate}
	\item The functions $\Lambda_j$ can be extended into $\R\setminus\{\Z+\beta_{n-j}\}$ by continuity and $1$-periodicity.
	\item $\Lambda_j$ are Lipschitz monotone on $(\beta_{n-j},\beta_{n-j}+1)$, and $\Lambda_j(\cdot-\beta_{n-j})\in \F(\gamma)$. Moreover, $\log(1+|\Lambda_j(\cdot-\beta_{n-j})|)$ belong to $L^1(0,1)$ uniformly in $n$ and $j$.
	\item $|\Lambda_j(x)-f(x-\beta_{n-j})|\le 2$ for all $x\in \R\setminus\{\Z+\beta_{n-j}\}$.
\end{enumerate}
\end{cor}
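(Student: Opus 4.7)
The plan is to derive all three parts from Lemma \ref{jumps} combined with Weyl's inequality applied to the splitting $H_n(x) = V_n(x) + \Delta_n$, where $V_n(x) = \diag(f(x), f(x+\alpha), \ldots, f(x+(n-1)\alpha))$ and $\|\Delta_n\| \le 2$. For Part (1), I would unpack \eqref{lambdaj_def} at each interior discontinuity $\beta_l$: on the left of $\beta_l$ the formula selects the index $i := (j+l-1)\,\mathrm{mod}\,n$, on the right $(i+1)\,\mathrm{mod}\,n$. The reindexing identity \eqref{jumps2} (which, by the same argument of Lemma \ref{pencils}, applies equally for $i=0$) says that left index $i$ reconnects to right index $i+1$ whenever $i \ne n-1$. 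The one obstruction $i = n-1$ corresponds precisely to $\beta_l \equiv \beta_{n-j} \pmod 1$, where \eqref{jumps1} gives the jump from $+\infty$ to $-\infty$. Continuity at every other $\beta_l$ combined with $1$-periodic extension yields (1).

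For Part (2), Lipschitz monotonicity of $\Lambda_j$ on each sub-interval $(\beta_l,\beta_{l+1})$ is already in Lemma \ref{jumps}, and the continuity across the interior $\beta_l$'s from Part (1) glues these pieces into a function that is Lipschitz monotone with constant $\gamma$ on $(\beta_{n-j}, \beta_{n-j}+1)$. The endpoint limits $\Lambda_j(\beta_{n-j}+0)=-\infty$ and $\Lambda_j(\beta_{n-j}+1-0)=+\infty$ come from \eqref{jumps1}. This delivers $(\F_1)$ and $(\F_2)$ for $g_j(y) := \Lambda_j(y+\beta_{n-j})$. For $(\F_3)$, Lipschitz monotonicity forces $|g_j(y)| \ge \gamma|y-y_0|$ near any zero $y_0$, making $\log|g_j|$ integrable; and the uniform $L^1$ bound on $\log(1+|g_j|)$ reduces, via the pointwise bound of Part (3), to $\log(1+|g_j(y)|) \le \log 3 + \log(1+|f(y)|)$, with $\log(1+|f|)\in L^1(0,1)$ because $(\F_3)$ for $f$ implies $\log(1+|f|)\le \log 2 + |\log|f|| \in L^1$.

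The real content is in Part (3). Weyl's inequality gives $|E_{j'}(x) - \mu_{j'}(x)| \le 2$ for every $j'$, where $\mu_0 \le \cdots \le \mu_{n-1}$ are the diagonal entries of $V_n(x)$ in increasing order. It therefore suffices to show that on $(\beta_l,\beta_{l+1})$ the value $f(x-\beta_{n-j})$ is precisely the $((j+l)\,\mathrm{mod}\,n)$-th smallest such entry. Let $k\in\{0,\ldots,n-1\}$ be the unique index with $\{-k\alpha\}=\beta_{n-j}$; then $x+k\alpha \equiv x-\beta_{n-j} \pmod 1$, so $f(x-\beta_{n-j})$ really is the $k$-th diagonal entry of $V_n(x)$. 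Because $f$ is strictly increasing on $(0,1)$, its rank among the diagonal entries coincides with the rank of $y := x-\beta_{n-j}$ among $\{\{x+m\alpha\}\}_{m=0}^{n-1}$ as points in $(0,1)$. To track this rank I would argue that as $x$ grows across $(\beta_{n-j},\beta_{n-j}+1)$ all $n$ points translate at unit speed and hence preserve their pairwise order, except at each $\beta_{l'}$, where one point wraps from $1^-$ to $0^+$ and thereby pushes the rank of $y$ (which itself does not wrap on this interval) up by exactly one. Starting from rank $0$ at $x=\beta_{n-j}+0$, the rank on $(\beta_l,\beta_{l+1})$ equals $(l-(n-j))\,\mathrm{mod}\,n = (j+l)\,\mathrm{mod}\,n$, as needed. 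The main subtlety in the whole argument is this rank-tracking bookkeeping; everything else is routine application of Lemma \ref{jumps} and Weyl's inequality.
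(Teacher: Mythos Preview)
Your argument is correct and follows essentially the same route as the paper: Part~(1) from the jump relations \eqref{jumps1}--\eqref{jumps2}, Part~(2) from monotonicity of the $E_j$ plus the bound in Part~(3), and Part~(3) from Weyl's inequality comparing $H_n(x)$ to its diagonal. The paper compresses Part~(3) into a single sentence (``comparing the eigenvalues of $H$ and the diagonal part of $H$, and $\|\Delta\|\le 2$''), leaving implicit the identification of $f(x-\beta_{n-j})$ with the $((j+l)\bmod n)$-th ordered diagonal entry; your rank-tracking argument makes this explicit, and your observation that \eqref{jumps2} extends to the boundary index $i=0$ by the same application of Lemma~\ref{pencils} is exactly what is needed to close Part~(1).
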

\begin{proof}
Part $(1)$ follows from \eqref{jumps2}. Part $(2)$ follows from elementary perturbation theory. Part $(3)$ follows from comparing the eigenvalues of $H$ and the diagonal part of $H$, and the fact that $\|\Delta\|\le 2$.
\end{proof}

{\noindent \it Proof of Theorem $\ref{main_ids}$. }Recall the definition \eqref{ids_def}. Let $E_1<E_2$.
$$
N(E_2)-N(E_1)=\lim\limits_{n\to\infty}\frac{1}{n}\int_{[0,1)}\sum_{j=0}^{n-1}\one_{(E_1,E_2]}(E_j(x))=\lim\limits_{n\to\infty}\frac{1}{n}\sum_{j=0}^{n-1}\int_{[0,1)}\one_{(E_1,E_2]}(\Lambda_j(x))
$$
$$
\le \frac1n \sum_{j=0}^n \gamma^{-1}(E_2-E_1)=\gamma^{-1}(E_2-E_1),
$$
where the inequality follows from Lipschitz monotonicity property of $\Lambda_j(\cdot)$.\,\qed
\begin{remark}
In the setting of \cite{JK} (i.~e. when $f$ is bounded), one cannot, in general, choose continuous eigenvalue curves, as the jumps at finite energies will affect all eigenvalues. However, we will always have an inequality $E_j(\beta_l-0)\le E_{j+1}(\beta_l+0)$ under the same assumptions as \eqref{jumps2}, and the $\Lambda_j$ defined by \eqref{lambdaj_def} will have only positive jump discontinuities, except at $\beta_{n-j}$. Hence, the method from this section is also applicable there, and actually gives stronger result.
\end{remark}
\section{Box eigenvalue distribution modulo finite rank}

In this section, we establish some more delicate properties of the eigenvalue distribution of $H_q(x)$, for some special values of $q$. Suppose, we are only interested in the eigenvalue counting function $N_q(x,E)$ modulo some absolute constant, as $q\to \infty$; or, equivalently, all finite rank perturbations, as long as the total perturbation rank is bounded. The main conclusion of this section is that, modulo those assumptions, the separation properties of the eigenvalues $E_j(x)$, $0\le j\le q-1$, are at least as good as those of the sequence $\{\Lambda_0(x),\Lambda_0(x+\alpha),\ldots,\Lambda_0(x+(q-1)\alpha)\}$.

We will always denote $\Lambda(x)=\Lambda_0(x)$ (however, it will be clear later that one can use any $\Lambda_j$ for that purpose); the dimension of the box under consideration will be clear from the context. Whenever we write $H_q(x)$, we assume that $q$ is a {\it good denominator} of $\alpha$; that is, the points $\{j\alpha,1\le j\le q-1\}$, split $(0,1)$ into $q$ intervals of lengths between $\frac{1}{2q}$ and $\frac{3}{2q}$. WLOG, we can also assume that $\|q\alpha\|\le \frac{1}{\sqrt{5}q}$, and that each interval $\l(j/q,(j+1)/q\r)$ contains exactly one point of the form $\{\alpha\},\{2\alpha\}\ldots,\{q\alpha\}$.

It will be convenient to use the following notation: for two $(n\times n)$-matrices $A,B$, we say $A\approx B$ if $A$ is unitarily equivalent to $B$ modulo a finite rank perturbation; here, ``finite'' means ``bounded by an absolute constant''. Similarly, we will use the notation $A\lesssim B$ and $A\gtrsim B$.
\begin{lemma}
\label{invariance1}
Suppose $1\le n\le q$, $0\le y\le 2/q$. Then
$$
H_n(x)\lesssim H_n(x+y).
$$
\end{lemma}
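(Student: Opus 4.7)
The plan is to reduce the claim to a pointwise comparison of the diagonal parts of $H_n(x)$ and $H_n(x+y)$, and then use the good denominator property of $q$ to control the number of lattice sites on which the shift $x\mapsto x+y$ crosses a jump of $f$ at an integer.

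Since the discrete Laplacian inside $H_n$ is $x$-independent, one has
$$
H_n(x)-H_n(x+y)=D_n(x)-D_n(x+y),
$$
where $D_n(x)=\diag(f(x),f(x+\alpha),\ldots,f(x+(n-1)\alpha))$. Thus it suffices to exhibit a diagonal matrix $K$ of rank bounded by an absolute constant with $D_n(x)-D_n(x+y)\le K$, for then $H_n(x)\le H_n(x+y)+K$ yields $H_n(x)\lesssim H_n(x+y)$. I would introduce the ``bad set'' $S=\{l\in\{0,\ldots,n-1\}\colon\{x+l\alpha\}\in(1-y,1)\}$ of indices on which the shift crosses the singularity. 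For $l\notin S$, both $\{x+l\alpha\}$ and $\{x+y+l\alpha\}$ lie in the same connected component of $(0,1)$ on $\T^1$, so Lipschitz monotonicity $(\F_2)$ gives $f(x+l\alpha)\le f(x+y+l\alpha)$, i.e.\ $(D_n(x)-D_n(x+y))_{ll}\le 0$. Taking $K$ to be the diagonal matrix supported on $S$ with entries $f(x+l\alpha)-f(x+y+l\alpha)$ then yields $D_n(x)-D_n(x+y)\le K$.

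The only remaining step is to bound $\rank K=|S|$ by an absolute constant. Since $n\le q$ and $q$ is a good denominator, the points $\{\{x+l\alpha\}\colon 0\le l\le n-1\}$ form a subset of the pairwise $\tfrac{1}{2q}$-separated set $\{\{x+j\alpha\}\colon 0\le j\le q-1\}$ on $\T^1$; as the target interval $(1-y,1)$ has length at most $2/q$, it contains at most $5$ of these points, so $|S|\le 5$. There is no serious obstacle; the only subtlety worth flagging is that the diagonal entries of $K$ may be arbitrarily large (since $f$ is unbounded near its singularities), but this is harmless because $\lesssim$ constrains only the rank, not the norm, of the defect.
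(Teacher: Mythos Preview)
Your proof is correct and follows essentially the same approach as the paper: both arguments observe that the off-diagonal part is $x$-independent, that each diagonal entry $f(\cdot+l\alpha)$ is monotone on $[x,x+y]$ except when the shift crosses a singularity, and then bound the number of such crossings by an absolute constant using the $\tfrac{1}{2q}$-separation of the orbit points guaranteed by the good denominator hypothesis. Your write-up is in fact more explicit than the paper's (which simply says ``at most three jump points''), and the discrepancy between your bound $|S|\le 5$ and the paper's ``three'' is immaterial since only an absolute constant is needed for $\lesssim$.
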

\begin{proof}
On the interval $[x,x+y]$, all diagonal entries of $H_n(\cdot)$ increase monotonically, except for at most three jump points. Each jump generates at most rank one perturbation.
\end{proof}
\begin{lemma}
\label{invariance2}
For $0\le m\le q-1$, we have
$$
H_q(x)\lesssim H_q(x+m\alpha)\lesssim H_q(x+q\alpha),\quad \textrm{if}\quad \{q\alpha\}<1/2;
$$
$$
H_q(x+q_k\alpha)\lesssim H_q(x+m\alpha)\lesssim H_q(x),\quad \textrm{if}\quad \{q\alpha\}>1/2.
$$
\end{lemma}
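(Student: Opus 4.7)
The plan is to reduce both inequalities to a single comparison argument via a cyclic-shift permutation. Focus on the case $\{q\alpha\} < 1/2$; the other case is symmetric. Define the permutation unitary $\Pi_m$ on $\ell^2([0,q-1])$ by $\Pi_m e_j = e_{(j-m)\mod q}$. A direct computation shows that the diagonal of $\Pi_m^* H_q(x+m\alpha)\Pi_m$ has entries $f(x+i\alpha)$ for $i \ge m$ and $f(x+i\alpha+q\alpha)$ for $i < m$. Writing $q\alpha = N + \delta$ with $N \in \Z$ and $\delta = \{q\alpha\} \le 1/(\sqrt{5}\, q)$ (using the good-denominator hypothesis), $1$-periodicity of $f$ converts the second set of entries to $f(x+i\alpha+\delta)$.

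By Lipschitz monotonicity, $f(x+i\alpha+\delta) \ge f(x+i\alpha)$, with exceptions only at those $i$ for which the interval $[x+i\alpha, x+i\alpha+\delta]$ crosses the unique discontinuity point of $f$ modulo $1$. The $m$ candidate intervals have length $\delta < 1/(2q)$ and are anchored at the points $i\alpha \mod 1$, which are separated by at least $1/(2q)$ on $\T^1$ by the good-denominator property; hence they are pairwise disjoint, and at most one can contain the discontinuity. The diagonal contribution to $\Pi_m^* H_q(x+m\alpha)\Pi_m - H_q(x)$ therefore splits as $D_+ + D_-$ with $D_+ \ge 0$ and $\rank D_- \le 1$. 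For the Laplacian part, because $\pi_m(i) = (i-m)\mod q$ is a cyclic shift, $\Pi_m^* \Delta \Pi_m$ agrees with $\Delta$ on all nearest-neighbor edges except that the edge $(m-1,m)$ is removed and replaced by the wrap-around edge $(0,q-1)$, so $\rank(\Pi_m^* \Delta \Pi_m - \Delta) \le 4$. Combining, $\Pi_m^* H_q(x + m\alpha) \Pi_m = H_q(x) + D_+ + K$ with $D_+ \ge 0$ and $\rank K \le 5$, which is precisely the statement $H_q(x) \lesssim H_q(x+m\alpha)$.

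For the second inequality $H_q(x+m\alpha) \lesssim H_q(x+q\alpha)$, substitute $y = x + m\alpha$ and rewrite it as $H_q(y) \lesssim H_q(y + (q-m)\alpha)$; for $1 \le m \le q-1$ this is exactly the first inequality applied to $y$ with shift index $q-m \in \{1,\ldots,q-1\}$. The edge case $m = 0$ reduces, via $1$-periodicity, to $H_q(x) \lesssim H_q(x+\delta)$, immediate from Lemma \ref{invariance1} since $\delta < 2/q$. The case $\{q\alpha\} > 1/2$ is handled symmetrically: then $q\alpha \equiv -\delta \mod 1$ with small $\delta > 0$, the diagonal entries decrease under the shift, and all inequalities reverse as stated in the lemma. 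The main subtlety is the Laplacian bookkeeping --- the fact that cyclic (as opposed to generic) permutations disturb $\Delta$ by only a rank-$O(1)$ operator --- together with the combinatorial observation that at most one of the $m$ short intervals can contain the single discontinuity point of $f$, which is where the good-denominator property enters in an essential way.
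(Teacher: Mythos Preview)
Your proof is correct and follows essentially the same approach as the paper: a cyclic shift of basis vectors, then a diagonal comparison using monotonicity of $f$ and the good-denominator spacing, together with a rank-$O(1)$ correction for the Laplacian. The only organizational difference is that the paper first decouples the cyclically permuted matrix into a direct sum $H_m(x+q\alpha)\oplus H_{q-m}(x+m\alpha)$ and then invokes Lemma~\ref{invariance1} on the first block, whereas you compare the full diagonals directly after conjugation, in effect inlining the argument of Lemma~\ref{invariance1}; the bootstrap for the second inequality (substituting $y=x+m\alpha$, shift $q-m$) is identical to the paper's.
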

\begin{proof}
Note that $\{q\alpha\}$ is either very close (closer than $1/\sqrt{5}q$) to $0$, or to $1$. In the first case, we have
\beq
\label{invariance_eq1}
H_q(x+m\alpha)\approx H_m(x+q\alpha)\oplus H_{q-m}(x+m\alpha)\gtrsim H_m(x)\oplus H_{q-m}(x+m\alpha)\approx H_q(x).
\eeq
In the first equality we applied cyclic permutation of basis vectors, then a finite rank perturbation to transform the off-diagonal part into the usual Laplacian, and then removed two off-diagonal entries so that the operator de-couples; the total is a rank $4$ perturbation after unitary equivalence. The inequality follows from Lemma \ref{invariance1}.

Now, we can apply \eqref{invariance_eq1} again, replacing $x$ by $x+m\alpha$ and $m$ by $q-m$, and obtain
$$
H_q(x+q\alpha)=H_q(x+m\alpha+(q-m)\alpha)\gtrsim H_q(x+m\alpha)\gtrsim H_q(x).
$$
The case $\{q\alpha\}>1/2$ is similar, if one reverses the inequalities. One can check that in both cases, each inequality is at most rank $6$.
\end{proof}
\begin{cor}
\label{invariance3}
For every $x\in [0,1)$, $E\in \R$, $1\le|m|\le q-1$, there exists $y\in [0,1)$, $\|x-y\|\le \|q\alpha\|$ such that
$$
|N_q(x,E)-N_q(y+m\alpha,E)|\le C,
$$
where $C$ is an absolute constant.
\end{cor}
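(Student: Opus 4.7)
The plan is to combine the sandwich from Lemma~\ref{invariance2}, applied at $x$ and at a nearby shift, with a step-function argument for $h(y):=N_q(y+m\alpha,E)$. I would handle the case $1\le m\le q-1$ and $\{q\alpha\}<1/2$ in detail; the cases $m<0$ and $\{q\alpha\}>1/2$ are analogous after interchanging the roles of the two candidate shifts. Lemma~\ref{invariance2} applied at $x$ gives $H_q(x)\lesssim H_q(x+m\alpha)$, and applied at $x-q\alpha$ gives $H_q((x-q\alpha)+m\alpha)\lesssim H_q(x)$, each being a perturbation of absolute-constant rank. Since a bounded-rank positive perturbation shifts each eigenvalue upward by a bounded number of indices, these pass to counting functions as
\[
N_q(x+m\alpha,E)\le N_q(x,E)+C_0,\qquad N_q(x-q\alpha+m\alpha,E)\ge N_q(x,E)-C_0,
\]
so $y_1:=x$ and $y_2:=x-q\alpha$, which both lie within distance $\|q\alpha\|$ of $x$, sandwich $N_q(x,E)$ from above and below.

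I next claim that on the segment $[y_2,y_1]$ the integer-valued step function $h(y)$ has jumps only of size $-1$, $0$, or $+1$. Downward jumps of size $-1$ occur where an eigenvalue of $H_q(y+m\alpha)$ crosses $E$ upward; because $H_q(y+m\alpha)$ is a tridiagonal Jacobi matrix, its spectrum is simple, so at any single $y$ at most one eigenvalue can equal $E$, forcing each drop to be exactly $-1$. Upward jumps of size $+1$ occur at $y$-values where some $y+(m+j')\alpha\in\Z$; at such a point exactly one diagonal entry of $H_q(y+m\alpha)$ resets from $+\infty$ to $-\infty$, and the associated continuous eigenvalue curve $\Lambda_j$ of Corollary~\ref{Sinai1} resets likewise. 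These $y$-values form a translate of the $q$-point $\alpha$-orbit, whose consecutive gaps exceed $1/(2q)>\|q\alpha\|$, so at most one such $y$ lies in $[y_2,y_1]$. If a downward crossing and an upward reset happen to coincide at the same $y$, the two contributions cancel to a net change of $0$, still consistent with the claim.

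Since $h$ is a step function with jumps in $\{-1,0,+1\}$, its trajectory on $[y_2,y_1]$ attains every integer in $[\min h,\max h]$. From the endpoint bounds, $\max h\ge h(y_2)\ge N_q(x,E)-C_0$ and $\min h\le h(y_1)\le N_q(x,E)+C_0$, so the range of $h$ overlaps $[N_q(x,E)-C_0,N_q(x,E)+C_0]$; any $y\in[y_2,y_1]$ realizing this overlap yields the assertion with absolute constant $C=C_0$. The main obstacle is establishing the $\pm 1$ jump structure: simplicity of tridiagonal spectra rules out larger downward jumps, and the good-denominator hypothesis limits the upward jumps in the short interval of length $\|q\alpha\|$.
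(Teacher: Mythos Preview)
Your proof is correct and follows the same skeleton as the paper: apply Lemma~\ref{invariance2} at $x$ and at $x-q\alpha$ to obtain the one-sided bounds $h(y_1)\le N_q(x,E)+C_0$ and $h(y_2)\ge N_q(x,E)-C_0$, then select a suitable $y$ in between. The paper compresses all of this into ``immediately follows from Lemma~\ref{invariance2}'' and simply records $C=30$; your intermediate-value argument is exactly the kind of detail that justifies this sentence.

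One minor remark: the observation that $[y_2,y_1]$ contains at most one reset point is correct but not actually needed. Your IVT step only uses that every jump of $h$ has size in $\{-1,0,+1\}$, which you already established from simplicity of the Jacobi spectrum (downward jumps) and Lemma~\ref{jumps} (each $\beta_l$ contributes exactly $+1$). Once that is known, $h$ attains every integer between $h(y_2)$ and $h(y_1)$ regardless of how many jumps occur, so you can drop the paragraph about the gap structure of the $\alpha$-orbit without loss.
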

\begin{proof}
The claim immediately follows from Lemma \ref{invariance2}. The case $m<0$ can be obtained replacing $x$ by $x-m\alpha$. One can check that $C=30$ is sufficient.
\end{proof}
\begin{cor}
\label{constant1}
There exists an absolute constant $C$ such that
$$
|N_q(x,E)-N_q(y,E)|\le C,\quad \forall x,y\in[0,1).
$$
\end{cor}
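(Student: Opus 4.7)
The plan is to combine Corollary~\ref{invariance3} with Lemma~\ref{invariance1}: the first pins down $N_q(\cdot,E)$ at a dense collection of anchor points, and the second handles the remaining $O(1/q)$ gaps. Fix a reference phase $x_1\in[0,1)$. By Corollary~\ref{invariance3}, for each $m$ with $1\le|m|\le q-1$ there exists $y_m\in[0,1)$ with $|y_m-x_1|\le\|q\alpha\|$ such that $|N_q(x_1,E)-N_q(z_m,E)|\le C_1$, where $z_m$ denotes the representative of $y_m+m\alpha$ in $[0,1)$. By the three-distance theorem, the orbit $\{m\alpha\bmod 1:1\le|m|\le q-1\}$ has consecutive gaps on $\T$ of length at most $3/(2q)$, and each $z_m$ deviates from $m\alpha$ by at most $\|q\alpha\|\le 1/(\sqrt{5}\,q)$. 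Thus the anchor set $\{z_m\}\cup\{x_1\}$ is $K/q$-dense on $\T$ for some absolute constant $K$.

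Given an arbitrary $x_2\in[0,1)$, I would select anchors $z^-,z^+$ from this set flanking $x_2$ cyclically with $x_2-z^-,\,z^+-x_2\le K/q$, and then invoke Lemma~\ref{invariance1} in two forward-shift applications. The shift $z^-\to x_2$ gives $H_q(z^-)\lesssim H_q(x_2)$, hence $N_q(x_2,E)\le N_q(z^-,E)+C_2$; the shift $x_2\to z^+$ gives $H_q(x_2)\lesssim H_q(z^+)$, hence $N_q(z^+,E)\le N_q(x_2,E)+C_2$, equivalently $N_q(x_2,E)\ge N_q(z^+,E)-C_2$. Combined with the anchor estimate $|N_q(x_1,E)-N_q(z^\pm,E)|\le C_1$, these sandwich $N_q(x_2,E)$ to within $C_1+C_2$ of $N_q(x_1,E)$, and the triangle inequality yields $|N_q(x,E)-N_q(y,E)|\le 2(C_1+C_2)$ for all $x,y\in[0,1)$.

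The main obstacle is the one-sided nature of Lemma~\ref{invariance1}: a single positive shift only controls $N_q$ in one direction. Sandwiching $x_2$ between two anchors sidesteps this by using Lemma~\ref{invariance1} in the positive direction on both sides, so that the two opposite-direction conclusions on $N_q(x_2,E)$ combine into a two-sided bound. A minor extension of Lemma~\ref{invariance1} to shifts of size $K/q$ slightly larger than $2/q$ is invoked to accommodate the worst-case gap; this is harmless because the proof of that lemma only counts the number of diagonal discontinuities traversed by the shift, which stays bounded by an absolute constant for any fixed absolute multiple of $1/q$.
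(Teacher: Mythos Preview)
Your argument is correct and takes a route different from the paper's. The paper argues by contradiction using the global jump structure of $N_q(\cdot,E)$: this function has exactly $q$ upward unit jumps (at the $\beta_l$) and $q$ downward unit jumps (where some $\Lambda_j$ crosses $E$), and if it dropped by a large amount on one interval $(\beta_l,\beta_{l+1})$ then Corollary~\ref{invariance3} would propagate a comparable drop to $\sim q/3$ disjoint short translates, contradicting the fact that the total decrement over $[0,1)$ is only $q$. Your approach is more direct: Corollary~\ref{invariance3} furnishes a $K/q$-dense anchor set on which $N_q$ is pinned to within $C_1$ of $N_q(x_1,E)$, and the one-sided operator monotonicity of Lemma~\ref{invariance1} sandwiches an arbitrary phase between two neighbouring anchors. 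This bypasses both the pigeonhole step and the explicit use of the $\Lambda_j$-curves. Two minor remarks: your $z_m$ lies within $\|q\alpha\|$ of $x_1+m\alpha$, not of $m\alpha$, which is immaterial since the gap structure is translation-invariant; and the extension of Lemma~\ref{invariance1} to shifts of size $K/q$ slightly exceeding $2/q$ is indeed harmless, since an interval of that length contains at most $\lceil 2K\rceil+1$ of the points $\beta_l$ (minimum gap $1/(2q)$), so the rank of the correction remains an absolute constant.
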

\begin{proof}
Due to monotonicity of $\Lambda_j$, as $x$ runs over $[0,1)$, the counting function $N_q(x,E)$ changes its values $2q$ times. That is, the value is decreased by $1$ whenever $\Lambda_j(x)=E$, and increased by $1$ at the points $\beta_l$. Due to Corollary \ref{invariance3}, it would be sufficient to show that $N(\cdot,E)$ cannot decrease more than by $C_1$ on any interval $(\beta_l,\beta_{l+1})$. Suppose that it does. Then there exists an interval of size $\frac{1}{10q}$ such that the function decreases by, say, $C_1/20$ on that interval. Corollary \ref{invariance3} implies that it will decrease by $C_1/20-C$ on at least $q/3$ non-overlapping intervals of size $\frac{1}{10q}$. For sufficiently large $C_1$, this contradicts the fact that the total increment on any subset of $[0,1)$ is at most $q$.
\end{proof}
\begin{lemma}
\label{invariance4}
Let $N_q(x,E)$ be the eigenvalue counting function of $H_q(x)$, and $N_q^{\Lambda}(x,E)$ be the counting function of the set 
\beq
\{\Lambda(x+j\alpha),\,0\le j\le q-1\}.
\eeq
Then
\beq
\label{invariance4_eq1}
|N_q^{\Lambda}(x,E)-N_q(y,E)|\le C,\quad \forall x,y\in [0,1),
\eeq
where $C$ is an absolute constant.
\end{lemma}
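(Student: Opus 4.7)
\emph{Proof plan.} The strategy is to reduce Lemma \ref{invariance4} to Corollary \ref{constant1} by reinterpreting the condition $\Lambda(y)\le E$ as a condition on $N_q(y,E)$, and then controlling the result via equidistribution of the orbit $\{x+j\alpha\}$. By definition \eqref{lambdaj_def} with $j=0$, on each interval $y\in (\beta_l,\beta_{l+1})$ the function $\Lambda=\Lambda_0$ coincides with the $l$-th eigenvalue $E_l(y)$ of $H_q(y)$. Let $L(y)\in\{0,\ldots,q-1\}$ denote the unique index with $y\in (\beta_{L(y)},\beta_{L(y)+1})$, so that $\Lambda(y)=E_{L(y)}(y)$. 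Then
\[
\Lambda(y)\le E \iff N_q(y,E)>L(y), \qquad N_q^\Lambda(x,E)=\sum_{j=0}^{q-1}\one_{N_q(x+j\alpha,E)>L(x+j\alpha)}.
\]

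By Corollary \ref{constant1} there exist an absolute constant $C_1$ and an integer $M_0=M_0(E)$ with $|N_q(y,E)-M_0|\le C_1$ for all $y\in[0,1)$. Hence the indicator above equals $1$ whenever $L(x+j\alpha)\le M_0-C_1-1$ and $0$ whenever $L(x+j\alpha)\ge M_0+C_1$, which gives the sandwich
\[
|\{j : L(x+j\alpha)\le M_0-C_1-1\}|\le N_q^\Lambda(x,E)\le |\{j : L(x+j\alpha)\le M_0+C_1-1\}|.
\]
Since $q$ is a good denominator, both the grid $\{\beta_l\}_{l=0}^{q-1}$ and the shifted orbit $\{x+j\alpha\mod 1\}_{j=0}^{q-1}$ are translates of a single $q$-element subset of $\T^1$ with consecutive gaps in $[1/(2q),3/(2q)]$. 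A standard three-distance/discrepancy argument then shows that for any interval $I\subset[0,1)$ the corresponding counts differ by an absolute constant $C_2$; applied to $I=[0,\beta_{k+1})$, which contains exactly $k+1$ points of the grid, this yields $|\{j:L(x+j\alpha)\le k\}|=k+1+O(1)$ uniformly in $x$ and $k$. Inserting $k=M_0\pm C_1-1$ gives $N_q^\Lambda(x,E)=M_0+O(1)$, and combining with $N_q(y,E)=M_0+O(1)$ from Corollary \ref{constant1} proves \eqref{invariance4_eq1}.

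The conceptual step is the equivalence $\Lambda(y)\le E\iff N_q(y,E)>L(y)$: once this is in place, the apparent mismatch between the ``$\Lambda$-counting'' and ``eigenvalue-counting'' pictures is replaced by the much smaller, controllable mismatch between two equidistributed orbits on $\T^1$. The main obstacle is to keep every constant absolute; this relies on the good-denominator assumption on $q$, which forces the orbit $\{x+j\alpha\}$ to be equidistributed with $O(1)$ discrepancy uniformly in $x$, and on the absolute constants provided by Corollary \ref{constant1}.
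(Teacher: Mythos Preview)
Your proof is correct and relies on the same two ingredients as the paper --- Corollary~\ref{constant1} and the equidistribution of the orbit $\{x+j\alpha\}$ coming from the good-denominator assumption --- but the logic is organized differently. The paper first argues that $N_q^{\Lambda}(x,E)$ is essentially constant in $x$, then observes that the multiset $\{\Lambda(x+j\alpha)\}_{j=0}^{q-1}$ coincides with $\{E_0(x_0),E_1(x_1),\ldots,E_{q-1}(x_{q-1})\}$ for suitable phases $x_l$ (i.e.\ the orbit hits each interval $(\beta_l,\beta_{l+1})$ exactly once), and finally invokes Corollary~\ref{constant1} to replace each $x_l$ by a common $y$. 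You instead rewrite $\one_{\Lambda(y)\le E}=\one_{N_q(y,E)>L(y)}$, apply Corollary~\ref{constant1} to turn this into a threshold condition on $L(y)$, and then count $|\{j:L(x+j\alpha)\le k\}|$ by a direct discrepancy estimate. Your route avoids having to verify the exact bijection between orbit points and $\beta$-intervals, replacing it with the weaker $O(1)$ discrepancy statement, which is precisely what the good-denominator hypothesis guarantees. Both arguments are short and yield the same absolute constant up to bookkeeping.
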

\begin{proof}
Since $q$ is a good denominator, the values of $N^{\Lambda}_q(x,E)$ do not change more than by a constant (in fact, more than by $1$) as $x$ runs over $[0,1)$. Moreover, the definition of $\Lambda$ implies that
$$
\{\Lambda(x+j\alpha),\,0\le j\le q-1\}=\{E_0(x_0),E_1(x_1),\ldots,E_{q-1}(x_{q-1})\}
$$
for some collection of points $x_0,\ldots,x_{q-1}$. Since the counting function of the eigenvalues does not depend on $x$ modulo constant, the choice of different arguments $x_j$ for different eigenvalues will not change the counting function more than by a constant.
\end{proof}

\section{A large deviation theorem}
Let $P_n(x,E)=\det(H_n(x)-E)$. Typically, a large deviation theorem states that $P_n(x,E)=n(L(E)+o(1))$, except for some ``small'' set of values of $x$. In the present situation, we do not have a uniform upper bound on $P$, and we will have some correction term that can be very large. The goal of this section is to isolate the contribution of the correction term to the LDT, and show that it will later cancel in the Green's function estimate. The following lemma is elementary and was also used in \cite{JK}.
\begin{lemma}
\label{goodlemma}
Suppose that $A_1$, $A_2$ are two finite subsets of $[m,M]$ of the same cardinality, 
$m>0$, and that $f$ is a nondecreasing function on $[m,M]$.
Assume that the difference of counting functions of $A_1$ and $A_2$ is bounded by $N$. Then
$$
\l|\sum_{a\in A_1}f(a)-\sum_{a\in A_2}f(a)\r|\le 2N\max\{|f(m)|, |f(M)|\}.
$$
\end{lemma}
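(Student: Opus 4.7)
The plan is to convert both sums into Stieltjes integrals against cumulative counting functions and then apply summation by parts so that the running difference of the counting functions --- which is bounded by $N$ by hypothesis --- appears as the integrand. The equal cardinality of $A_1$ and $A_2$ will force the boundary terms to vanish, isolating a clean $N$-weighted bound.

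Concretely, I would set $F_i(t)=\#\{a\in A_i:a\le t\}$ for $i=1,2$, so that
$$
\sum_{a\in A_i} f(a)=\int_{[m,M]} f(t)\,dF_i(t).
$$
Subtracting these two identities and applying Stieltjes integration by parts to $F_1-F_2$ against $f$, while using $F_1(M)-F_2(M)=|A_1|-|A_2|=0$ together with $F_1(m^-)=F_2(m^-)=0$, the boundary contribution cancels and I obtain
$$
\sum_{a\in A_1}f(a)-\sum_{a\in A_2}f(a)=-\int_{[m,M]}(F_1(t)-F_2(t))\,df(t).
$$
Since $f$ is nondecreasing, $df$ is a nonnegative measure of total mass $f(M)-f(m^-)\le |f(M)|+|f(m)|$. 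Combining this with the hypothesis $|F_1(t)-F_2(t)|\le N$ yields
$$
\left|\sum_{a\in A_1}f(a)-\sum_{a\in A_2}f(a)\right|\le N\bigl(|f(M)|+|f(m)|\bigr)\le 2N\max\{|f(m)|,|f(M)|\},
$$
as claimed.

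The only mild subtlety is making Stieltjes integration by parts rigorous when $F_1,F_2$ are step functions and $f$ is merely nondecreasing (hence of bounded variation but possibly discontinuous). I would handle this either by first approximating $f$ from above by smooth nondecreasing functions and passing to the limit, or, equivalently, by giving a direct combinatorial Abel summation argument: sort both $A_1$ and $A_2$ in increasing order, pair the $i$-th smallest elements, and telescope. Both routes yield the same constants. The hypothesis $m>0$ plays no role in the core computation; it is presumably there because in the eventual applications $f$ is a logarithm and must be evaluated away from $0$.
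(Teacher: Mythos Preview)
Your proof is correct. The paper does not actually supply a proof of this lemma; it only remarks that the statement is elementary and was also used in \cite{JK}. Your Abel summation / Stieltjes integration-by-parts argument is precisely the natural elementary proof one would give: writing the difference as $-\sum_j S_j\bigl(f(c_{j+1})-f(c_j)\bigr)$ with $S_j=F_1(c_j)-F_2(c_j)$ (the partial sums vanish at the end by equal cardinality), and then bounding by $N(f(M)-f(m))\le 2N\max\{|f(m)|,|f(M)|\}$. Your remark that the hypothesis $m>0$ plays no role in the inequality itself, and is present only because in the applications $f=\log|\cdot - E|$ must be evaluated away from zero, is also accurate.
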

Recall
$$
F^{>B}_n(x,E)=\prod_{l\in [0,n-1]\colon|f(x+l\alpha)-E|> B}(1+|f(x+l\alpha)-E|),
$$

\begin{theorem}
\label{th_lower_ldt}
Suppose that the sequence $\{B(n)\}_{n\in \mathbb N}$ satisfies the assumptions of Lemma $\ref{lemma_uniform_upper}$. Let $k$ deliver the smallest value:
$$
|E_{k}(x)-E|=\min\limits_{0\le k\le q-1}|E_k(x)-E|.
$$
For every $\varepsilon>0$ there exists $n_0(\varepsilon)$ such that, if $q>n_0(\varepsilon)$ is a good denominator, then
\beq
\label{lower_eq}
\log|P_q(x,E)|\ge q\l(\int_0^1\log|\Lambda(x)-E|\,dx-\varepsilon\r)+\log F_q^{>B(q)}(x,E)+C\log|E_{k}(x)-E|.
\eeq
The estimates can be made uniform in $E$ on any compact interval.
\end{theorem}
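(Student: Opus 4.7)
The plan is to write $\log|P_q(x,E)| = \sum_{j=0}^{q-1} \log|E_j(x)-E|$, compare this to the Riemann sum $\sum_{l=0}^{q-1} \log|\Lambda(x+l\alpha)-E|$, and then approximate the latter by $q\int_0^1 \log|\Lambda(y)-E|\,dy$ modulo an explicit large-value tail. The bridge between the eigenvalue sum and the $\Lambda$-sampling sum is Lemma \ref{invariance4}: the counting functions of the multisets $A_1:=\{E_j(x)\}$ and $A_2:=\{\Lambda(x+l\alpha)\}$ differ by an absolute constant $C_0$, so Lemma \ref{goodlemma} can transfer sums between them. The explicit tail $\log F_q^{>B(q)}(x,E)$ will appear when converting the large values of $\Lambda$ into large values of $f$ via Corollary \ref{Sinai1}(3), namely $|\Lambda(y)-f(y)|\le 2$.

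Concretely, I would set $\delta=|E_k(x)-E|$ and split the sum for $A_1$ into $\log|E_k(x)-E|$ and $\sum_{j\neq k}\log|E_j(x)-E|$, where all remaining summands satisfy $|E_j-E|\ge\delta$. Since $A_1$ has no other points in $(E-\delta,E+\delta)$, Lemma \ref{invariance4} implies $A_2$ has at most $2C_0+1$ points there, each contributing at least $\log\delta$; these get absorbed into the $C\log|E_k(x)-E|$ term. Outside this neighborhood, I would apply Lemma \ref{goodlemma} separately to the parts of $A_1\setminus\{E_k\}$ and $A_2$ lying in $[E+\delta,E+B(q)]$ and in $[E-B(q),E-\delta]$, where $\log|{\cdot}-E|$ is monotone; the resulting error is at most $C_0(|\log\delta|+\log B(q))$, again absorbed into $C\log|E_k(x)-E|$ and $\varepsilon q$. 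For the far tail $|a-E|>B(q)$, I would use that $|\Lambda(y)-f(y)|\le 2$ implies $\log|\Lambda(y)-E|=\log|f(y)-E|+O(B(q)^{-1})$ when $|f(y)-E|>B(q)$, so summing over the $l(B(q),q,x,E)$ tail indices reproduces $\log F_q^{>B(q)}(x,E)+o(q)$ under the hypotheses on $B(q)$. For the bulk, Koksma's inequality applied to the truncated integrand $\log|\Lambda(y)-E|\cdot\mathbf{1}_{|\Lambda-E|\in[\delta,B(q)]}$ gives, by Proposition \ref{prop_discrepancy}-type reasoning, a discrepancy error $O(q^{1-1/\tau+\varepsilon}(\log B(q)+|\log\delta|))=o(q)$, recovering $q\int_0^1\log|\Lambda(y)-E|\,dy$ up to $\varepsilon q$ after reinserting the integrals over $|\Lambda-E|\notin[\delta,B(q)]$, which are $o(1)$ by Lipschitz monotonicity of $\Lambda$ and by $\log(1+|\Lambda|)\in L^1$ from Corollary \ref{Sinai1}(2).

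The principal obstacle is bookkeeping near the singularity of $\log|{\cdot}-E|$: both Lemma \ref{goodlemma} and Koksma's inequality degrade as the gap $\delta$ shrinks, so one must show the degradation is exactly of order $|\log\delta|$ with an absolute constant, which is what allows it to be absorbed into the explicit $C\log|E_k(x)-E|$ correction. The uniform bound $C_0$ from Lemma \ref{invariance4} is precisely what makes this possible; without it, the near-$E$ discrepancy between $A_1$ and $A_2$ would scale with $q$ and destroy the estimate. Uniformity in $E$ on compact intervals follows from the uniform discrepancy bound and from the uniform $L^1$ estimate on $\log(1+|\Lambda|)$ in Corollary \ref{Sinai1}(2).
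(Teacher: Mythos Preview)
Your overall strategy---compare $\sum_j\log|E_j(x)-E|$ to $\sum_l\log|\Lambda(x+l\alpha)-E|$ via Lemma~\ref{invariance4} and Lemma~\ref{goodlemma}, then pass to the integral by Koksma, isolating the tail $F_q^{>B(q)}$ through Corollary~\ref{Sinai1}(3)---is exactly the paper's. The difference is your choice of inner cutoff, and this is where there is a real gap. You take $\delta=|E_k(x)-E|$ as the truncation radius and then claim the Koksma error $O\bigl(q^{1-1/\tau+\varepsilon}(\log B(q)+|\log\delta|)\bigr)$ is $o(q)$. But $\delta$ can be arbitrarily small for given $x$ and $E$; there is no a~priori lower bound on $\dist(E,\sigma(H_q(x)))$. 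If, say, $|\log\delta|\ge q$, the Koksma error is neither $o(q)$ nor absorbable into $C\log|E_k(x)-E|$ with a \emph{fixed} constant $C$: that would require $C\gtrsim q^{1-1/\tau+\varepsilon}$, which diverges with $q$. Since the constant $C$ in \eqref{lower_eq} must be absolute for the downstream applications (Theorem~\ref{th_green_bound} and Lemma~\ref{multiscale}), the argument as written does not close.

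The paper avoids this by taking the inner cutoff to be $1/q$ rather than $\delta$, so the variation entering Koksma is $O(\log q+\log B(q))$ and the error is genuinely $o(q)$ regardless of how small $\delta$ is. The price is that one must know there are at most an absolute number of eigenvalues with $|E_j(x)-E|\le 1/q$; this is supplied by Corollary~\ref{constant1} (not Lemma~\ref{invariance4} alone), and \emph{all} of those near-$E$ eigenvalues---not just the closest one---are then charged to the term $C\log|E_k(x)-E|$. Your scheme can be repaired by making this substitution: use $1/q$ as the Koksma cutoff, invoke Corollary~\ref{constant1} (or, equivalently, combine Lemma~\ref{invariance4} with the Lipschitz monotonicity of $\Lambda$ and the $1/q$-separation of the orbit to bound $\#\{j:|E_j(x)-E|\le 1/q\}$), and reserve $\delta$ only for the final accounting of those $O(1)$ exceptional eigenvalues.
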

\begin{proof}
By definition,
$$
P_q(x,E)=\prod_{k=0}^{q-1}(E_k(x)-E).
$$
Corollary \ref{constant1} implies that on any interval $(\beta_l,\beta_{l+1})$, there are at most $C$ values of $k$ with $E_k(x)=E$ with $x$ from that interval. Hence, there are at most $C$ values of $k$ with $|E_k(x)-E|\le 1/q$. The contribution of these eigenvalues is contained in the last term of \eqref{lower_eq}.

The second term of \eqref{lower_eq} is a lower bound on the contribution from the eigenvalues with $|E_j(x)-E|\ge B(q)+3$, see Corollary \ref{Sinai1} (3). Let us call the remaining eigenvalues {\it regular}. Clearly, regular eigenvalues satisfy
$$
q^{-1}\le |E_j(x)-E|\le B(q)+3.
$$
The assumptions of Lemma \ref{lemma_uniform_upper} imply that there are $o(q)$ non-regular eigenvalues, uniformly in $x$. Denote
$$
\tilde\Lambda(x)=\begin{cases}
	\Lambda(x),& q^{-1}\le |\Lambda(x)-E|\le B(q)+3\\
	E+1,& \text{otherwise}.
\end{cases}
$$
From Lemmas \ref{invariance4} and \ref{goodlemma}, it follows that one can replace $E_j(x)$ in the contribution to \eqref{lower_eq} from regular eigenvalues by $\Lambda(x+j\alpha)$, with an error that can be absorbed into $\varepsilon$:
$$
\sum\limits_{k\colon E_k \text{ is regular}}\log|E-E_k(x)|=\sum\limits_{k=0}^{q-1}\log|\tilde\Lambda(x+j\alpha)-E|+o(q).
$$
Using Koksma's inequality similarly to \eqref{eq_discrepancy_1}, we can see that
$$
\sum\limits_{k\colon E_k \text{ is regular}}\log|E_k(x)-E|=q\int_0^1\log|\tilde \Lambda(x)-E|\,dx+o(q)=q\int_0^1\log|\Lambda(x)-E|\,dx+o(q),
$$
where we used uniform absolute continuity of the integral (Corollary \ref{Sinai1}) to remove the truncation from $\tilde\Lambda$.
\end{proof}
\begin{lemma}
\label{lemma_lambda_lyap}
In the notation of the previous theorem,
$$
\frac{1}{q}\int_0^1 \log|P_q(x,E)|\,dx=L(E)+o(1).
$$
\end{lemma}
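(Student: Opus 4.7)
The plan is to recast both sides through the density of states and reduce to the Thouless formula \eqref{thouless}. Introduce the probability measure $\mu_q$ on $\R$ defined by
$$
\int \phi\,d\mu_q := \frac{1}{q}\int_0^1 \tr\,\phi(H_q(x))\,dx=\frac{1}{q}\int_0^1\sum_{k=0}^{q-1}\phi(E_k(x))\,dx.
$$
Its distribution function is $\frac{1}{q}\int_0^1 N_q(x,E)\,dx$, and since $P_q(x,E)=\prod_k (E_k(x)-E)$, the left hand side of the lemma equals $\int_\R\log|E-E'|\,d\mu_q(E')$, while by Thouless the right hand side is $\int_\R\log|E-E'|\,dN(E')$. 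So the lemma reduces to the convergence of these integrals as $q\to\infty$ along good denominators.

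First I would establish weak convergence $\mu_q\to dN$: the defining relation \eqref{ids_def} gives pointwise convergence of the distribution functions, and the continuity of $N$ (Lipschitz by Theorem \ref{main_ids}) upgrades this to weak convergence; tightness follows from the tail bound below. Then I would verify uniform integrability of $E'\mapsto\log|E-E'|$ against $\{\mu_q\}$ in two regimes. Near the singularity $E'=E$, the continuous eigenvalue curves $\Lambda_j$ from Corollary \ref{Sinai1}(2) are Lipschitz monotone with constant $\gamma$ on their fundamental domains, giving the uniform estimate $\mu_q((E-\delta,E+\delta))\le 2\delta/\gamma$, from which an integration by parts bounds $\int_{|E-E'|<\delta}|\log|E-E'||\,d\mu_q$ by $C_\gamma\delta(1+|\log\delta|)$ uniformly in $q$ and in $E$ on compacts. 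For the tail at infinity, Corollary \ref{Sinai1}(3) gives $|\Lambda_j(x)-f(x-\beta_{n-j})|\le 2$, so a periodicity change of variables yields
$$
\int_{|E'|>R}\log(1+|E'|)\,d\mu_q(E')=\frac{1}{q}\sum_j\int_{\{|\Lambda_j(x)|>R\}}\log(1+|\Lambda_j(x)|)\,dx\le \int_{\{|f(y)|>R-2\}}\bigl(\log 3+\log(1+|f(y)|)\bigr)\,dy,
$$
which tends to zero as $R\to\infty$ by $(\F_3)$, uniformly in $q$. Finally I would combine the weak convergence with these two uniform-integrability estimates via a standard truncation of $\log|E-E'|$ (truncate the singularity and the tail, apply weak convergence to the bounded continuous remainder, and absorb the error uniformly in $q$).

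The main obstacle is the tail control at infinity: eigenvalues of $H_q(x)$ can be arbitrarily large, so uniform tightness of $\{\mu_q\}$ and uniform-in-$q$ integrability of $\log(1+|E'|)$ are not automatic from the abstract monotonicity class $\F(\gamma)$. What makes the argument work is that each $\Lambda_j$ is not just an arbitrary element of $\F(\gamma)$, but a rank-one perturbation of a shift of $f$ (Corollary \ref{Sinai1}(3)); this reduces the tail of $\mu_q$ to the tail of $f$ itself, at which point property $(\F_3)$ delivers the needed uniform decay.
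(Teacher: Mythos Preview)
Your argument is correct and takes a genuinely different route from the paper. The paper sandwiches $L(E)$ between $\frac{1}{n}\int\log|P_n|$ and $\frac{1}{n}\int\log(|P_{n-2}|+|P_{n-1}|+|P_n|)$ by integrating the pointwise upper bound of Lemma~\ref{lemma_uniform_upper} (the $x$-integral of the tail term $\log F_n^{>B(n)}$ is $o(n)$ by $(\F_3)$), and then invokes Lemma~\ref{goodlemma} on regular eigenvalues to show that $\frac{1}{n}\int\log|P_n|$ changes only by $o(1)$ as $n$ ranges over $\{q-2,q-1,q\}$, which closes the sandwich. You instead read both sides as logarithmic potentials (via Thouless) and reduce everything to weak convergence $\mu_q\to dN$ together with uniform integrability of $\log|E-\cdot|$, handling the local singularity through the Lipschitz bound on $\Lambda_j$ and the tail through Corollary~\ref{Sinai1}(3) plus $(\F_3)$. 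Your route is more self-contained---it never touches the Furman-type upper bound or the $F^{>B}$ splitting---and in fact delivers the conclusion for arbitrary $n\to\infty$, not only along good denominators. The paper's route, by contrast, recycles machinery already built for Theorem~\ref{th_lower_ldt} and Theorem~\ref{th_green_bound}, so within the paper's narrative it comes essentially for free.
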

\begin{proof}
Integrating the inequality from Lemma \ref{lemma_uniform_upper} and using \eqref{m_def}, we can see that
$$
\frac{1}{n}\int_0^1 \log|P_n(x,E)|+o(1)\le L(E)\le 
$$
$$
\le \frac{1}{n}\int_0^1\log\l(|P_{n-2}(x,E)|+|P_{n-1}(x,E)|+|P_{n}(x,E)|\r)+o(1),
$$
assuming $|n-q|\le C$ for some good denominator $q$. On the other hand, Lemma \ref{goodlemma} implies that, under the same assumption,
$$
\frac{1}{n}\int_0^1 \log|P_n(x,E)|=\frac{1}{q}\int_0^1 \log|P_q(x,E)|+o(1)
$$
(we apply Lemma \ref{goodlemma} to regular eigenvalues, and note that a finite rank perturbation would create at most finitely many singular eigenvalues, whose contribution will be $o(1)$).
\end{proof}
\begin{theorem}
\label{th_green_bound}
Under the assumptions and notation of Theorem $\ref{th_lower_ldt}$, let $\varepsilon,\sigma>0$. Suppose $0\le l_1<l_2\le q-1$, $l_2=l_1+n$, and $n\ge \sigma q$, where $q>n_0(\varepsilon,\delta)$ is a good denominator. Then
\beq
\label{eq_ratio_det}
\l|\frac{P_{n}(x+l_1\alpha)}{P_q(x)}\r|\le e^{-(q-n)(L(E)-\varepsilon)}|E_k(x)-E|^{-C}.
\eeq
\end{theorem}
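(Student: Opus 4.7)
The plan is to combine an upper bound on the numerator with the lower bound on the denominator, and then exploit the cancellation of the unbounded ``tails'' $F^{>B}$ that was emphasized in the introduction to this section.

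Choose a common truncation threshold $B=B(q)$ satisfying the hypotheses of Lemma \ref{lemma_uniform_upper} simultaneously at scales $n$ and $q$, which is possible since $n$ and $q$ are comparable by $n\ge\sigma q$. Since $P_n(x,E)$ is the $(1,1)$ entry of $M_n(x,E)$ (see \eqref{m_def}), Lemma \ref{lemma_uniform_upper} applied to $H_n(x+l_1\alpha)$ gives
\begin{equation*}
\log|P_n(x+l_1\alpha,E)| \le n(L(E)+\varepsilon_1) + \log F_n^{>B}(x+l_1\alpha,E),
\end{equation*}
while Theorem \ref{th_lower_ldt} applied to $H_q(x)$ gives
\begin{equation*}
\log|P_q(x,E)| \ge q\bigl(L(E) - \varepsilon_2\bigr) + \log F_q^{>B}(x,E) + C\log|E_k(x)-E|,
\end{equation*}
after identifying $\int_0^1\log|\Lambda(y)-E|\,dy$ with $L(E)+o(1)$. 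This identification is obtained by averaging the inequality in Theorem \ref{th_lower_ldt} against $x$ and comparing with Lemma \ref{lemma_lambda_lyap}: both the $x$-average of $\log F_q^{>B}(x,E)$ (using $\log|f|\in L^1$ and $B\to\infty$) and the $x$-average of $\log|E_k(x)-E|$ (using Lipschitz continuity of the IDS, Theorem \ref{main_ids}, which prevents any accumulation at the singularity $E$) contribute $o(q)$.

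The key structural point is that the index set $\{l_1\le l\le l_2-1:|f(x+l\alpha)-E|>B\}$ is a subset of $\{0\le l\le q-1:|f(x+l\alpha)-E|>B\}$, and each factor $1+|f(x+l\alpha)-E|$ is at least $1$; hence
\begin{equation*}
\log F_n^{>B}(x+l_1\alpha,E) \le \log F_q^{>B}(x,E),
\end{equation*}
i.e.\ the large-value tail in the numerator is dominated by the one in the denominator. Subtracting the two displayed bounds and using this cancellation yields
\begin{equation*}
\log\left|\frac{P_n(x+l_1\alpha,E)}{P_q(x,E)}\right| \le -(q-n)L(E) + n\varepsilon_1 + q\varepsilon_2 - C\log|E_k(x)-E|.
\end{equation*}
Given $\varepsilon$ and $\sigma$, one then chooses $\varepsilon_1,\varepsilon_2$ small enough in terms of $\varepsilon,\sigma$ so that the error $n\varepsilon_1+q\varepsilon_2$ is absorbed into $(q-n)\varepsilon$; exponentiating gives \eqref{eq_ratio_det}. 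The main subtlety is the averaging argument that identifies $\int_0^1\log|\Lambda(y)-E|\,dy$ with $L(E)$ modulo $o(1)$: both the $L^1$ control of $\log|f|$ (to kill the tail) and the Lipschitz IDS (to prevent logarithmic divergences from eigenvalues too close to $E$) are needed, and once this is in hand the rest of the proof is essentially algebraic bookkeeping combined with the tail cancellation.
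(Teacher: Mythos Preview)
Your proof is correct and follows essentially the same approach as the paper: upper bound on the numerator via Lemma~\ref{lemma_uniform_upper}, lower bound on the denominator via Theorem~\ref{th_lower_ldt} combined with Lemma~\ref{lemma_lambda_lyap}, then cancellation of the $F^{>B}$ tails. The only cosmetic difference is that you use a single common threshold $B$ (relying on the inclusion $[l_1,l_2-1]\subset[0,q-1]$), whereas the paper uses two thresholds $B_1(n)=n$ and $B_2(q)=\sigma q$ with $B_1(n)\ge B_2(q)$; your version is slightly cleaner and your explicit averaging justification for replacing $\int_0^1\log|\Lambda-E|$ by $L(E)$ fills in a step the paper leaves implicit.
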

\begin{proof}
Lemma \ref{lemma_uniform_upper} implies an upper bound on the numerator:
$$
\log |P_{l_2-l_1+1}(x+l_1\alpha)|\le n (L(E)+\varepsilon_1)+\log F_n^{>B_1(n)}(x,E).
$$
Theorem \ref{th_lower_ldt} and Lemma \ref{lemma_lambda_lyap} imply the lower bound on the denominator
$$
\log|P_q(x,E)|\ge q\l(L(E)-\varepsilon_2\r)+\log F_q^{>B_2(q)}(x,E)+C\log|E_{k}(x)-E|,
$$
where both $B_1$ and $B_2$ satisfy the assumptions of Lemma \ref{lemma_uniform_upper}. Since $n\ge \delta q$ and any sequence $B(n)=cn$ satisfies the assumptions, one can pick, say, $B_1(n)=n$, $B_2(n)=\sigma n$, which would imply
$$
\log F_n^{>B_1(n)}(x,E)\le \log F_q^{>B_2(q)}(x,E).
$$
The latter implies that all terms that can possibly violate uniform upper bound in the numerator, will be cancelled by similar terms in the denominator, and the upper bound on the ratio will have the correct form \eqref{eq_ratio_det}, after an appropriate choice of $\varepsilon$.
\end{proof}
\section{Proof of Theorem \ref{main}}
Suppose that $\psi\colon \Z\to \C$ is a non-trivial solution of
$$
H(x)\psi=E\psi,\quad |\psi(n)|\le C(1+|n|)^C.
$$
Recall that $\psi$ is called a {\it generalized eigenfunction} of $H(x)$, and $E$ is called a {\it generalized eigenvalue} of $H(x)$. In this section, we will prove the following theorem, which implies the main result (see Proposition \ref{loc_sufficient} and Theorem \ref{main}).
\begin{theorem}
\label{main_decay}
Suppose $\alpha\in \dc$, $f\in \F({\gamma})$. Suppose that $x\notin \Z+\alpha\Z$, and $E$ is a generalized eigenvalue of $H(x)$ with $L(E)>0$. Then the corresponding generalized eigenfunction $\psi$ belongs to $\ell^2(\Z)$.
\end{theorem}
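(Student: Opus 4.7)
Let $\psi$ be a non-trivial polynomially bounded generalized eigenfunction, $|\psi(n)|\le C(1+|n|)^C$. The strategy is the usual Bourgain--Goldstein scheme: establish exponential decay of $\psi(n)$ as $|n|\to\infty$ via Poisson/Cramer expansion on well-chosen boxes, where "good" boxes are produced by combining Wegner-type avoidance for the IDS with Diophantine equidistribution. The main technical input is Theorem~\ref{th_green_bound}, and the principal obstacle is neutralising the bad factor $|E_k(x+l_1\alpha)-E|^{-C}$ appearing there.

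First, I would fix a large $|n|$, choose a good denominator $q\asymp |n|/10$, and consider the family of intervals $I_{l_1}=[l_1,l_1+q-1]$ as $l_1$ ranges over an integer range of length $\lfloor q/3\rfloor$ inside $[n-2q/3,\,n-q/3]$, so that $n$ is sandwiched with both distances to the endpoints $\gtrsim q/3$. The Poisson formula for $\psi$ on $I_{l_1}$ reads
\[
\psi(n)=-G_{I_{l_1}}(n,l_1;E)\psi(l_1-1)-G_{I_{l_1}}(n,l_1+q-1;E)\psi(l_1+q),
\]
and Cramer's rule expresses both Green's-function entries as ratios $P_m(x+\cdot\,\alpha,E)/P_q(x+l_1\alpha,E)$ with $m\ge \sigma q$, $\sigma=1/3$. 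Theorem~\ref{th_green_bound} thus provides
\[
|G_{I_{l_1}}(n,\cdot;E)|\le e^{-(q/3)(L(E)-\varepsilon)}\,|E_{k(l_1)}(x+l_1\alpha)-E|^{-C},
\]
once $l_1$ is chosen so that the shifted box has no eigenvalue unreasonably close to~$E$.

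The key step is the avoidance of bad $l_1$. For any $\delta>0$, Theorem~\ref{main_ids} gives
\[
\int_{[0,1)}\#\{j:|E_j(y)-E|<\delta\}\,dy=q\bigl(N(E+\delta)-N(E-\delta)\bigr)\le 2q\delta/\gamma,
\]
so the Lebesgue measure of the bad set $\mathcal{B}_\delta=\{y:\min_j|E_j(y)-E|<\delta\}$ is at most $2q\delta/\gamma$. By Koksma's inequality and the Diophantine discrepancy bound used in Proposition~\ref{prop_discrepancy}, the number of $l_1$ in our range of length $\lfloor q/3\rfloor$ with $x+l_1\alpha\in \mathcal{B}_\delta$ is at most $(2q\delta/\gamma)\cdot(q/3)+O(q^{1-1/\tau+\varepsilon})$. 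Choosing $\delta=e^{-qL(E)/(10C)}$ makes the main term exponentially small and (for $q$ large) leaves at least one good $l_1$; for that $l_1$ the bad factor is bounded by $e^{qL(E)/10}$, which is absorbed into $e^{-(q/3)(L(E)-\varepsilon)}$ at the price of shrinking the effective decay rate slightly.

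Finally, combining the good Green's function bound with the polynomial bound $|\psi(l_1\pm 1)|\le C(1+|n|)^C$ in the Poisson formula yields $|\psi(n)|\le (1+|n|)^C e^{-c|n|}$ for large $|n|$, hence $\psi\in \ell^2(\Z)$. The same argument works symmetrically for $n\to -\infty$ by choosing $l_1$ on the other side of $n$. The principal difficulty, as already flagged, is the coupled choice of the avoidance scale $\delta$ and the box size $q$: one needs the Wegner-type measure bound $2q\delta/\gamma$, the discrepancy correction $q^{-1/\tau+\varepsilon}$, and the exponential in the green's function estimate all to cooperate, which is precisely why the Diophantine condition on $\alpha$ and the Lipschitz IDS are both essential.
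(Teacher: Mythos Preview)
Your overall strategy---Poisson/Cramer expansion on sliding boxes, combined with Theorem~\ref{th_green_bound}---is the right one, but the avoidance step contains a genuine gap. The bad set $\mathcal{B}_\delta=\{y:\min_j|E_j(y)-E|<\delta\}$ is a union of $q$ intervals (one for each monotone curve $\Lambda_j$), so $\one_{\mathcal{B}_\delta}$ has total variation of order $q$, not $O(1)$. Koksma's inequality therefore produces an error of order $q\cdot (q/3)\cdot D_{q/3}=O(q^{2-1/\tau+\varepsilon})$, not the $O(q^{1-1/\tau+\varepsilon})$ you wrote; for every $\tau\ge 1$ this already exceeds the size $q/3$ of your range, and you cannot conclude that a single good $l_1$ exists. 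Even bypassing Koksma, a direct count gives at most one orbit point per component of $\mathcal{B}_\delta$, i.e.\ up to $q$ bad $l_1$'s, which again may swallow your whole range of length $q/3$. A secondary issue is that a good denominator $q\asymp|n|/10$ need not exist for a given $|n|$.

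The paper closes this by using the nontriviality of $\psi$ as an \emph{anchor} rather than trying to show every large $n$ is regular in isolation. Fix $m_0$ with $\psi(m_0)\neq 0$; then $m_0$ is $(L(E)-\delta,q)$-singular for all large good denominators $q$. If some $n$ with $|n-m_0|>(q+1)/2$ were also singular, then for every admissible box start $a$ near $m_0$ \emph{and} every one near $n$---in total $q+1$ distinct values---the phase $x+a\alpha$ lies within $e^{-cq}$ of the zero set $Z_q=\{y:P_q(y,E)=0\}$, which has cardinality $\le q$. Pigeonhole forces two of these phases within $2e^{-cq}$ of one another, and the Diophantine condition then forces their index difference, hence $|n-m_0|$, to exceed $e^{c'q}$ (Lemma~\ref{multiscale}). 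Thus every $n$ with $q<|n-m_0|<e^{c'q}$ is regular; since the intervals $[q,e^{c'q})$ over good denominators cover all large integers, exponential decay follows. The missing idea in your argument is precisely this two--singular--point pigeonhole against $|Z_q|\le q$; the Wegner/discrepancy route does not supply enough good boxes on its own.
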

The method of proof is close to \cite{J_annals} and \cite{JK}. In fact, the line of the argument is very close to \cite{JK} with some modifications in applying large deviations.
We will need some preliminaries. For an interval $[a,b]\subset \Z$, denote
$$
G_{[a,b]}(x;m,n)=(H_{[a,b]}(x)-E)^{-1}(m,n)=(H_{b-a}(x+a)-E)^{-1}(m,n).
$$
A point $m\in \Z$ is called $(\mu,q)$-regular, if there is an interval $[n_1,n_2]=[n_1,n_1+q-1]$, $m\in [n_1,n_2]$, $|m-n_i|\ge q/5$, and
$$
|G_{[n_1,n_2]}(x;m,n_i)|<e^{-\mu|m-n_i|}.
$$
The Poisson formula
$$
\psi(m)=-G_{[n_1,n_2]}(x;m,n_1)\psi(n_1-1)-G_{[n_1,n_2]}(x;m,n_2)\psi(n_2+1),\quad m\in [n_1,n_2].
$$
implies that any point $m$ with $\psi(m)\neq 0$ is $(\mu,q)$-singular for sufficiently large $q$.
\begin{lemma}
\label{multiscale}
Under the assumptions of Theorem $\ref{main_decay}$, fix $0<\delta<L(E)$. There exists $q_0(f,\alpha,\delta)$ such that, for any good denominator $q>q_0$, any two $(L(E)-\delta,q)$-singular points $m,n$ with $|m-n|>\frac{q+1}{2}$, satisfy $|m-n|>e^{C(\alpha,f)q}$.
\end{lemma}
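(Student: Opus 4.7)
The plan is to convert the singularity hypotheses at $m$ and $n$ into a large family of resonant phases for $H_q$, and then derive a contradiction via a packing argument on the circle using the Lipschitz-monotone structure of the eigenvalue curves $\Lambda_j$ from Corollary \ref{Sinai1} together with the Diophantine separation of the $\alpha$-orbit.

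First I would unpack singularity through Theorem \ref{th_green_bound}. Let $I_m$ denote the set of integers $a$ with $m-a\ge q/5$ and $(a+q-1)-m\ge q/5$ (so $|I_m|\ge 3q/5-O(1)$), and define $I_n$ analogously. For each $a\in I_m$, Cramer's rule gives
\[
G_{[a,a+q-1]}(x;m,a)=\pm\frac{P_{q-1-(m-a)}(x+(m+1)\alpha,E)}{P_q(x+a\alpha,E)},\qquad
G_{[a,a+q-1]}(x;m,a+q-1)=\pm\frac{P_{m-a}(x+a\alpha,E)}{P_q(x+a\alpha,E)};
\]
both numerator lengths lie in $[q/5-1,4q/5]$, so Theorem \ref{th_green_bound} applies (with any $\sigma<1/5$) and produces the upper bound $\le e^{-|m-n_i|(L(E)-\varepsilon)}|E_k(x+a\alpha)-E|^{-C_0}$ for some absolute constant $C_0$. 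Comparing this with the singularity lower bound $\ge e^{-|m-n_i|(L(E)-\delta)}$ (valid for at least one endpoint $n_i\in\{a,a+q-1\}$), cancelling the $|m-n_i|$-exponentials, and choosing any $\varepsilon\in(0,\delta)$ yields
\beq\label{eq_plan_res}
|E_k(x+a\alpha)-E|\le e^{-\kappa q}\ \text{ for every }a\in I_m,\qquad \kappa:=\frac{\delta-\varepsilon}{5C_0}>0,
\eeq
with the same bound for every $b\in I_n$; here $E_k(y)$ denotes the eigenvalue of $H_q(y)$ closest to $E$.

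Next I would analyze the resonance set
\[
S_\eta:=\{y\in[0,1):\dist(E,\sigma(H_q(y)))\le\eta\}=\bigcup_{j=0}^{q-1}\{y\in[0,1):|\Lambda_j(y)-E|\le\eta\},\qquad\eta:=e^{-\kappa q}.
\]
By Corollary \ref{Sinai1}, each $\Lambda_j$ is $1$-periodic and Lipschitz monotone with lower constant $\gamma$ over a period, so $\{y:|\Lambda_j(y)-E|\le\eta\}$ is an interval of length $\le 2\eta/\gamma$, and $S_\eta$ is covered by at most $q$ such intervals. A direct case analysis on $|m-n|$ using the hypothesis $|m-n|>(q+1)/2$ shows
\[
|I_m\cup I_n|\ge \min\bigl\{6q/5,\,3q/5+|m-n|\bigr\}-O(1)>q,
\]
so by \eqref{eq_plan_res} we have strictly more than $q$ distinct phases $\{x+a\alpha:a\in I_m\cup I_n\}$ inside $S_\eta$.

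Finally I would run the packing step. Set $D:=\diam(I_m\cup I_n)\le|m-n|+3q/5$. By $\alpha\in\dcct$, any two distinct phases indexed by $a,a'\in I_m\cup I_n$ are separated on the circle by $\ge c|a-a'|^{-\tau}\ge cD^{-\tau}$. Each of the $q$ covering intervals (length $\le 2\eta/\gamma$) therefore contains at most $1+\lfloor 2\eta D^\tau/(c\gamma)\rfloor$ such points, giving total capacity $\le q(1+\lfloor 2\eta D^\tau/(c\gamma)\rfloor)$; comparison with $|I_m\cup I_n|>q$ forces $\lfloor 2\eta D^\tau/(c\gamma)\rfloor\ge 1$, hence $D\ge (c\gamma/2)^{1/\tau}e^{\kappa q/\tau}$. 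Since $D\le|m-n|+3q/5$ absorbs the polynomial correction, we conclude $|m-n|>e^{C(\alpha,f)q}$ for any $C(\alpha,f)<\kappa/\tau$ and $q\ge q_0$. The main obstacle is Step 1: the upper exponent $L(E)-\varepsilon$ in Theorem \ref{th_green_bound} must match the singularity exponent $L(E)-\delta$ up to arbitrarily small slack so that the cancellation yields the clean uniform estimate \eqref{eq_plan_res}; once that is in place, the rest is a geometric consequence of the Lipschitz IDS (Theorem \ref{main_ids}, through Corollary \ref{Sinai1}) combined with the Diophantine separation of the orbit.
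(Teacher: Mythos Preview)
Your argument is correct and follows the same strategy as the paper's proof: convert singularity at $m$ (and at $n$) via Theorem~\ref{th_green_bound} into the resonance estimate $|E_k(x+a\alpha)-E|\le e^{-\kappa q}$ for a block of left endpoints $a$, identify the resonance set with the exponentially thin neighborhood of the $q$ points of $Z_q=\{y:\det(H_q(y)-E)=0\}$ (equivalently the $q$ intervals $\Lambda_j^{-1}([E-\eta,E+\eta])$), and then pigeonhole against the Diophantine separation of the orbit.

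The only differences are in bookkeeping rather than substance. The paper selects exactly $[(q+1)/2]$ admissible left endpoints per singular point (so $q+1$ phases total) and argues that $q+1$ points in $q$ exponentially small arcs force two phases to be exponentially close, whence the Diophantine condition gives the index gap. You instead take the full admissible window $|I_m|\approx 3q/5$, get $|I_m\cup I_n|>q$ from $|m-n|>(q+1)/2$, and run an explicit capacity bound (each arc holds $\le 1+\lfloor 2\eta D^{\tau}/(c\gamma)\rfloor$ orbit points). Your version is a touch more quantitative and makes the role of Corollary~\ref{Sinai1} (Lipschitz monotonicity of the $\Lambda_j$) explicit, but the two arguments are interchangeable.
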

\begin{proof}
The proof follows exactly the scheme from \cite{JK} which, in turn, is partially based on \cite{J_annals}.
We have the following expressions for Green's function matrix elements if $b=a+q-1$, $a\le l\le b$.
\beq
\label{greenleft}
|G_{[a,b]}(x;a,l)|=\l|\frac{P_{b-l}(x+(l+1)\alpha)}{P_{q}(x+a\alpha)}\r|,
\eeq
\beq
\label{greenright}
|G_{[a,b]}(x;l,b)|=\l|\frac{P_{l-a}(x+a \alpha)}{P_{q}(x+a\alpha)}\r|.
\eeq
Suppose that $m-[3q/4]\le l\le m-[3q/4]+[(q+1)/2]$ and $m$ is $(\gamma(E)-\delta,q)$-singular. Then
\beq
\label{greenfunctionlarge}
|G_{[a,b]}(x;a,l)|>e^{-(l-a)(\gamma(E)-\delta)}\quad \text{or}\quad  |G_{[a,b]}(x;l,b)|>e^{-(b-l)(\gamma(E)-\delta)}
\eeq
for all intervals $[a,b]$ such that $|a-l|,|b-l|\ge q/5$ and $b=a+q-1$. However, Theorem \ref{th_green_bound} with $n\ge q/5$ implies
$$
|G_{[a,b]}(x;a,l)|\le e^{-(l-a)(\gamma(E)-\varepsilon)}|E_k(x+a\alpha)-E|^{-C},
$$
$$
|G_{[a,b]}(x;l,b)|\le e^{-(b-l)(\gamma(E)-\varepsilon)}|E_k(x+a\alpha)-E|^{-C}
$$
for any fixed $\varepsilon>0$ and large $q$. Take $\varepsilon<\delta/2$. Then, the only way to obtain \eqref{greenfunctionlarge} is to have
$$
|E_k(x+a\alpha)-E|\le e^{-q\delta/10C},\quad\text{that is},\quad \dist(x,Z_q)\le \gamma^{-1}e^{-q\delta/10C},
$$
where $Z_q=\{x\colon \det (H_q(x)-E)=0\}$ is a set of cardinality $\le q$ and the bound follows from Lipschitz behavior of eigenvalues.

Suppose now that the points $m_1$ and $m_2=m_1+r$
are both $(L(E)-\delta,q)$-singular, $r>0$. Let 
$$
x_j=\{x+(m_1-[3q/4]+(q-1)/2+j)\alpha\},\quad j=0,\ldots, [(q+1)/2]-1,
$$
$$
x_j=\{x+(m_2-[3q/4]+(q-1)/2+j-[(q_k+1)/2])\alpha\},\quad j=[(q+1)/2],\ldots, q.
$$
If $r>\frac{q+1}{2}$, then all these points are distinct and must be exponentially close to $Z_q$. As a consequence, two of the points $x_j$ must be exponentially close to each other, which is only possible, due to the Diophantine condition, if $m_1$ is exponentially separated from $m_2$.
\end{proof}
Theorem \ref{main_decay} now follows in the same way as in \cite{JK}, as all intervals of the form $[q,e^{Cq})$ become $q$-regular for all sufficiently large good denominators $q$, the intervals $[q,e^{Cq})$ cover $\Z_+$ except for finitely many points, and similar arguments can be applied on $\Z_-$.
\begin{remark}
\label{lyaploc}
Similarly to \cite{JK} (and with the same proof), the operator family $H(x)$ has {\it uniform Lyapunov localization}. That is, on any compact energy interval and for any $\delta>0$, there exists $C(\delta)$ such that for any
eigenfunction $\psi$  satisfying $H \psi=E\psi$, $\|\psi\|_{l^{\infty}}=1$, 
there exists $n_0(\psi)$ so that we have
%solution  $H_{\alpha,\lambda}(x)\psi=E\psi$, $\|\psi\|_{l^{\infty}}=1$, $\psi(n_0)=1$ and $n_0$ is the leftmost point where $|\psi(n_0)|=1$, then, for any $\delta>0$,
$$
|\psi(n)|\le C(\delta)e^{-(L(E)-\delta)|n-n_0(\psi)|}.
$$
In particular, we have uniform localization on any compact interval of uniform positivity of $L(E)$.
\end{remark}
\section{Proof of Theorem \ref{th_spec_R}}
\noindent Let $\Theta(\alpha)=[0,1)\setminus(\Z+\alpha\Z)$. Recall that $\{x\}$ denotes the fractional part of $x$.
\begin{prop}
\label{lemma_independence}
Let $\alpha\in \R\setminus\Q$, and $x,y\in \Theta(\alpha)$. Then $\sigma(H(x))=\sigma(H(y))$.
\end{prop}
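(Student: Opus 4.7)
The plan is to reduce the general statement to strong resolvent convergence along a minimal orbit of the rotation. The first observation is that translation by $\alpha$ in the parameter is a unitary equivalence: if $U$ is the shift $(U\psi)(n)=\psi(n-1)$ on $\ell^2(\Z)$, then a direct computation gives $H(x+\alpha)=U^{-1}H(x)U$, so in particular $\sigma(H(x+k\alpha))=\sigma(H(x))$ for every $k\in\Z$ for which $x+k\alpha\in\Theta(\alpha)$. Since $x\in\Theta(\alpha)$ forces $x+k\alpha\notin\Z+\alpha\Z$, the fractional part $x_k:=\{x+k\alpha\}$ is automatically in $\Theta(\alpha)$, and $H(x_k)=H(x+k\alpha)$ by $1$-periodicity of $f$.

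Next, fix $x,y\in\Theta(\alpha)$. By Weyl's equidistribution (minimality of the irrational rotation on $\T$), there is a sequence $k_n\in\Z$ with $x_n:=\{x+k_n\alpha\}\to y$ in $\T$. I want to show that $H(x_n)\to H(y)$ in the strong resolvent sense, and then conclude using the previous paragraph. Because $f$ is real and measurable, each $H(x)$ is essentially self-adjoint on the space $c_{00}(\Z)$ of finitely supported sequences (the multiplication operator $V_x(n)=f(x+n\alpha)$ is self-adjoint on its natural maximal domain with $c_{00}$ as a core, and $\Delta$ is bounded). Thus $c_{00}(\Z)$ is a common core for all the $H(x_n)$ and for $H(y)$.

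For $\psi\in c_{00}(\Z)$, one has
\[
(H(x_n)\psi)(m)-(H(y)\psi)(m)=\bigl(f(x_n+m\alpha)-f(y+m\alpha)\bigr)\psi(m),
\]
which involves only finitely many $m$. For each such $m$, $y+m\alpha\notin\Z$ (since $y\in\Theta(\alpha)$), so $\{y+m\alpha\}\in(0,1)$, where $f$ is continuous by $(\F_1)$; continuity of the rotation on $\T$ gives $\{x_n+m\alpha\}\to\{y+m\alpha\}$, and hence $f(x_n+m\alpha)\to f(y+m\alpha)$. Thus $H(x_n)\psi\to H(y)\psi$ in $\ell^2(\Z)$ for every $\psi$ in the common core, which (by the standard criterion, e.g.\ Reed--Simon VIII.25(a)) yields strong resolvent convergence $H(x_n)\to H(y)$.

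Strong resolvent convergence implies that every $E\in\sigma(H(y))$ is a limit of points $E_n\in\sigma(H(x_n))$. Since $\sigma(H(x_n))=\sigma(H(x))$ for all $n$ by the first paragraph, and $\sigma(H(x))$ is closed, we obtain $\sigma(H(y))\subset\sigma(H(x))$. Swapping the roles of $x$ and $y$ yields the reverse inclusion and hence equality. The main technical point to watch is that unboundedness of $f$ could in principle break the core argument, but since the perturbing multiplication operators converge pointwise at every lattice site where they are nonzero on a given finitely supported $\psi$, the common-core version of strong resolvent convergence applies verbatim; the restriction $x,y\in\Theta(\alpha)$ is exactly what prevents any of the relevant $f$-values from blowing up in the limit.\qed
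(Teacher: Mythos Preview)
Your proof is correct and follows essentially the same route as the paper: use minimality of the irrational rotation to find $\{x+k_n\alpha\}\to y$, verify convergence of $H(x+k_n\alpha)\psi\to H(y)\psi$ on the common core $c_{00}(\Z)$, invoke Reed--Simon VIII.25 for strong resolvent convergence, and conclude $\sigma(H(y))\subset\sigma(H(x))$ via the unitary equivalence of the approximating operators; then swap $x$ and $y$. Your write-up is somewhat more detailed (explicitly exhibiting the shift unitary and checking continuity of $f$ at the relevant points), but the argument is the same.
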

\begin{proof}
A similar argument was used in \cite[Lemma 4.1]{JL1}. 
Let $m_j$ be a sequence of integers such that $\{x+m_j\alpha\}\to y$, $j\to \infty$. Clearly, for any $\psi$ with finite support,
\beq
\label{eq_strong_resolvent}
H(x+m_j\alpha)\psi\to H(y)\psi.
\eeq
Since all operators $H(x)$, $x\in \Theta(\alpha)$, are essentially self-adjoint on the set of vectors with finite support, \eqref{eq_strong_resolvent} and \cite[Theorem VIII.25]{rs1} imply that $H(x+m_j\alpha)$ converges to $H(y)$ in the strong resolvent sense. Since all operators in the left hand side of \eqref{eq_strong_resolvent} are unitary equivalent, it follows from \cite[Theorem VIII.24]{rs1} that $\sigma(H(y))\subset\sigma(H(x))$ (strong resolvent convergence cannot create new spectra). As $x$, $y$ are arbitrary, this completes the proof.
\end{proof}
Let
\beq
\label{eq_union_spectrum}
\Sigma_+(\alpha)=\bigcup_{x\in \Theta(\alpha)}\sigma(H(x))
\eeq
be the {\it union spectrum} of the family $H(x)$. Note that the union is tautological for $\alpha\in \R\setminus\Q$, but is not trivial for $\alpha\in \Q$.
\begin{lemma}
\label{lemma_rational_spectrum}
Let $\alpha\in \Q$. Then $\Sigma_+(\alpha)=\R$.	
\end{lemma}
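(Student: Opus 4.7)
The plan rests on the observation that for $\alpha = p/q\in \Q$ in lowest terms, every operator $H(x)$ with $x\in\Theta(\alpha) = [0,1)\setminus\frac{1}{q}\Z$ is a $q$-periodic Jacobi operator on $\ell^2(\Z)$. By standard Floquet--Bloch theory, $\sigma(H(x)) = \bigcup_{k\in[0,2\pi)}\sigma(H_q^{(k)}(x))$, where $H_q^{(k)}(x)$ denotes the $q\times q$ restriction with Floquet boundary condition $\psi(q)=e^{ik}\psi(0)$. In particular, every eigenvalue of the periodic restriction $H_q^{\mathrm{per}}(x):=H_q^{(0)}(x)$ lies in $\sigma(H(x))\subseteq \Sigma_+(\alpha)$, so the problem reduces to showing that for every $E\in\R$ one can find $x\in\Theta(\alpha)$ for which $E$ is an eigenvalue of $H_q^{\mathrm{per}}(x)$.

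The key observation is that the construction of continuous eigenvalue curves from Section~4 applies verbatim to the family $\{H_q^{\mathrm{per}}(x)\}$: at every $\beta\in\frac{1}{q}\Z$, exactly one diagonal entry of $H_q^{\mathrm{per}}(x)$ diverges to $\pm\infty$ as $x\to\beta$ by $(\F_1)$, while all other entries, including the bounded rank-$2$ wrap-around, are continuous in $x$. Thus Lemma~\ref{pencils} and the construction of Corollary~\ref{Sinai1} produce $q$ continuous curves $\Lambda_j^{\mathrm{per}}(x)$, each continuous on $[0,1)\setminus\{\beta_{l_j}\}$ and satisfying $\Lambda_j^{\mathrm{per}}(\beta_{l_j}^+)=-\infty$, $\Lambda_j^{\mathrm{per}}(\beta_{l_j}^-)=+\infty$ as $x$ traverses the circle. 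Crucially, this step uses only $(\F_1)$ and not the monotonicity assumption $(\F_2)$. The intermediate value theorem then yields, for every $E\in\R$ and every $j$, at least one $x^*\in[0,1)\setminus\{\beta_{l_j}\}$ with $\Lambda_j^{\mathrm{per}}(x^*)=E$.

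For all but finitely many $E$, one such solution $x^*$ avoids $\frac{1}{q}\Z$ entirely and thus lies in $\Theta(\alpha)$, immediately giving $E\in\sigma(H(x^*))$. The exceptional $E$'s lie in the finite collection $\bigcup_{j,\beta}\{\Lambda_j^{\mathrm{per}}(\beta):\beta\in\frac{1}{q}\Z\setminus\{\beta_{l_j}\}\}$; for each such $E$, surjectivity of $\Lambda_j^{\mathrm{per}}$ still forces a transversal crossing of the level $E$, and if that crossing occurs at some $\beta\in\frac{1}{q}\Z$ one brings in the anti-periodic companion $\Lambda_j^{\mathrm{ap}}(x)$ built identically from $H_q^{(\pi)}(x)$. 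Since $\sigma(H(x))$ contains the closed interval between $\Lambda_j^{\mathrm{per}}(x)$ and $\Lambda_j^{\mathrm{ap}}(x)$, provided $\Lambda_j^{\mathrm{ap}}(\beta)\neq E$ this band contains $E$ in its interior for $x\in\Theta(\alpha)$ close to $\beta$ on the appropriate side. The hard part is the fully degenerate configuration in which $\Lambda_j^{\mathrm{per}}$ and $\Lambda_j^{\mathrm{ap}}$ both attain the value $E$ at $\beta$; this forces a non-generic simultaneous eigenvalue condition on the reduced $(q-1)\times(q-1)$ operators at $\beta$, which I will rule out either by varying $k\in[0,2\pi)$ (effective when the removed diagonal site is not at the wrap-around endpoint, so that the reduced operator genuinely depends on $k$) or by appealing to a different index $j'$ whose surjective curve $\Lambda_{j'}^{\mathrm{per}}$ supplies a transversal crossing inside $\Theta(\alpha)$.
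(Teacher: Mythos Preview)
Your core idea---constructing continuous eigenvalue curves for a fixed Floquet block $H_q^{(k)}(x)$ via Lemma~\ref{pencils} and invoking the intermediate value theorem---is exactly the paper's approach. However, you miss a key simplification and consequently get tangled in edge cases that the paper never encounters.

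The observation you overlook is the periodicity $h_\theta(x+1/q)\cong h_\theta(x)$ (unitary equivalence for fixed $\theta$, equivalently fixed $k$). This forces the ordered eigenvalues to satisfy $E_j(x+1/q)=E_j(x)$, so it suffices to let $x$ range only over the single open interval $(0,1/q)\subset\Theta(\alpha)$, where no diagonal entry blows up. On that interval Lemma~\ref{pencils} gives $E_0(0+)=-\infty$, $E_{q-1}(1/q-)=+\infty$, and at each endpoint the branches match: $E_j(1/q-)=E_{j+1}(1/q+)=E_{j+1}(0+)$ for $0\le j\le q-2$. Hence the $q$ eigenvalue branches over $(0,1/q)$ concatenate into one continuous curve from $-\infty$ to $+\infty$, and the intermediate value theorem finishes the proof---with $x^*\in(0,1/q)$ automatically in $\Theta(\alpha)$.

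By contrast, you let $x$ run over the full circle and then worry about the crossing point $x^*$ landing on $\tfrac{1}{q}\Z$. Your proposed patches (anti-periodic companions, varying $k$, switching to another index $j'$) are sketched rather than carried out, and the ``fully degenerate'' case you isolate is not actually ruled out; in particular, when the removed site sits at the wrap-around endpoint the reduced $(q-1)\times(q-1)$ operator is genuinely independent of $k$, so your first fallback fails there, and your second fallback is circular since nothing prevents the same degeneracy from recurring for every $j'$. All of this is bypassed by the periodicity reduction to $(0,1/q)$. Your remark that only $(\F_1)$ is needed is correct and matches the paper.
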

\begin{proof}
We will prove a stronger statement is true: let $\alpha=p/q$ where $(p,q)=1$, and apply the Floquet decomposition to the periodic operator $H(x)$:
\beq
\label{eq_floquet}
H(x)=\int_{[0,1/q)}\oplus h_{\theta}(x)\,d\theta.
\eeq
Then, for each fixed $\theta$,
\beq
\label{eq_spec_theta_R}
\bigcup_{x\in (0,1/q)}\sigma(h_{\theta}(x))=\R.
\eeq 
It is well known that $h_{\theta}(x+1/q)$ is unitary equivalent to $h_{\theta}(x)$. Denote by $E_j(x)$, $j=0,\ldots,q-1$, the eigenvalues of $h_{\theta}(x)$ counted with multiplicities, assuming that $x\in (0,1)\setminus\{1/q,2/q,\ldots,(q-1)/q\}$. Clearly, $E_0(0+)=-\infty$ and $E_0(1/q-)$ is finite. Due to Lemma \ref{pencils}, the graph of $E_0(x)$ must extend continuously through $x=1/q$. Since $E_0(1/q+)=E_0(0+)=-\infty$, we have $E_0(1/q-)=E_1(1/q+)=E_1(0+)$. Similarly, $E_2(0+)=E_1(1/q-)$, etc.
By repeating this process, we show that the closure of the ranges of $E_j$ on $(0,1/q)$ covers $\mathbb R$. Alternatively, one can repeat the proof of Corollary \ref{Sinai1} and notice that, in the rational case, the curves defined by \eqref{lambdaj_def} are the translations of the same curve by multiples of $1/q$. If $f$ is monotone, one can additionally conclude that the covering has multiplicity one.
\end{proof}
The proof of the following theorem goes along the same lines as the arguments in \cite[Theorem 3.6]{avron_simon_ids} and \cite[Lemma 2.3]{bellissard_simon_cantor}. The technical difficulty in the unbounded case is that one has to deal with sequences of phases that approach the ``resonant set'' $\Z+\alpha\Z$, for which the operator is not well defined.

{\noindent \bf Proof of Theorem 1.2. }
Due to Lemma \ref{lemma_rational_spectrum}, one can consider the case $\alpha\in \R\setminus\Q$, in which $\Sigma:=\sigma(H(x))$ does not depend on $x\in \Theta(\alpha)$. By contradiction, assume that $I\subset\R\setminus\Sigma$ is a non-empty open interval. Let
$$
P_j=\langle\cdot,e_j\rangle e_j
$$
be the projection on the basis vector in $\ell^2(\Z)$, and $P_j^{\perp}=I-P_j$. Take any sequence $x_j\in \Theta(\alpha)$, $x_j\to 0$. Then
\beq
\label{eq_p0perp}
P_0^{\perp}H(x_j)P_0^{\perp}\to P_0^{\perp} H(0)P_0^{\perp},
\eeq
where in $H(0)$ we replace the infinite value of the potential at $n=0$ by zero, and the convergence is considered in the strong resolvent sense (can be verified on vectors with finite support using \cite[Theorem VIII.25]{rs1}).

Each operator in the left hand side of \eqref{eq_p0perp} is at most rank two perturbation of $H(x_j)$, and hence has at most two points of spectrum in $I$. By passing to a subsequence, one can assume that
$$
\sigma(P_0^{\perp}H(x_j)P_0^{\perp})\cap I\to \sigma_0,\quad j\to \infty
$$
where $\#\sigma_0\le 2$, which implies that
$$
\sigma(P_0^{\perp} H(0)P_0^{\perp})\cap I\subset \sigma_0,
$$
and hence
$$
\|(P_0^{\perp} H(0)P_0^{\perp}-E)\psi\|\ge \dist (E,\sigma_0)\|\psi\|,\quad E\in I,
$$
for every $\psi\in \ell^2(\Z)$ with finite support. Similarly, one can define $P_k^{\perp} H(k\alpha) P_k^{\perp}$ by translation, which will all be unitarily equivalent for different values of $k$, and obtain
\beq
\label{eq_pk_resolvent}
\|(P_k^{\perp} H(k\alpha)P_k^{\perp}-E)\psi\|\ge \dist (E,\sigma_0)\|\psi\|,\quad E\in I,
\eeq
also for $\psi\in \ell^2$ with finite support.

Take $E\in I\setminus\sigma_0$, and take any sequence $p_n/q_n\to\alpha$ as $n\to \infty$. Similarly to \cite[Lemma 2.3]{bellissard_simon_cantor}, pick
\beq
\label{eq_xn_nonres}
x_n\in [0,1)\setminus \Theta(p_n/q_n)
\eeq
and a generalized eigenfunction $\psi_n$ such that
\beq
\label{eq_psik_eigen}
H(p_n/q_n)\psi_n =E\psi_n, \quad \psi_n(0)=1/2,\quad \|\psi_n\|_{\infty}\le 1.
\eeq
Note that the second condition can be achieved by translation, and \eqref{eq_xn_nonres} can be satisfied by perturbing the quasimomentum if needed. By passing to a subsequence and applying the diagonal process, one can assume that $x_n$ has a limit $x\in \R/\Z$, and $\psi_n$ has a pointwise limit 
$$
\psi\in \ell^{\infty}(\Z),\quad \psi(0)=1/2.
$$
If $x\in \Theta(\alpha)$, this completes the proof, as $\psi$ satisfies the eigenvalue equation $H(x)\psi=E\psi$, which contradicts the fact that $E\notin \Sigma$ and Schnol's theorem. Assume now that $x=\{k\alpha\}$, $k\in \Z$. Since $|f(x_n+k p_n/q_n)|\to \infty$, the eigenvalue equation for $\psi_n$ implies that $\psi_n(k)\to 0$ as $n\to \infty$, and hence $k\neq 0$. Similarly to the previous arguments, one can check that
$$
P_k^{\perp}H(x_n+k p_n/q_n)P_k^{\perp}\to P_k^{\perp}H(k\alpha)P_k^{\perp},\quad n\to \infty,
$$
in the strong resolvent sense. Moreover, by applying $P_k^{\perp}$ to \eqref{eq_psik_eigen}, one can see that $\psi$ satisfies the eigenvalue equation
\beq
\label{eq_psi_eigen}
P_k^{\perp}H(k\alpha)P_k^{\perp}\psi=E\psi;
\eeq
note that it is not necessarily true without $P_k^{\perp}$, as $f(x_n+k p_n/q_n)\psi_n(k)$ may not converge to zero.

The equation \eqref{eq_psi_eigen} contradicts \eqref{eq_pk_resolvent}, if one applies it to the truncated vector $\one_{[-m,m]}\psi$. Indeed, if $\psi\in \ell^2$, then $\psi$ is in the domain of $P_k^{\perp}H(k\alpha)P_k^{\perp}$, and hence the truncation error in the left hand side of \eqref{eq_psi_eigen} goes to zero. If $\psi\notin \ell^2(\Z)$, then one would eventually arrive to the same contradiction by choosing a sequence $m_j$ with $|f(x+m_j\alpha)|\le C$.
\section{Acknowledgements} The research was supported by the DMS--1600422/DMS--1758326 ``Spectral Theory of Periodic and Quasiperiodic Quantum Systems''. 
The author would like to thank Svetlana Jitomirskaya, Wencai Liu, Jeffrey Schenker, and Shiwen Zhang for valuable discussions and advice.
\bibliography{mary4} 
\bibliographystyle{plain}
\end{document}